\newtheorem{thm}{Theorem}[section]
\newtheorem{prop}[thm]{Proposition}
\newtheorem{lemma}[thm]{Lemma}
\theoremstyle{remark}
\newtheorem{remark}[thm]{Remark}
\theoremstyle{definition}
\numberwithin{equation}{section}
\numberwithin{table}{section}
\numberwithin{figure}{section}
\DeclareMathOperator{\Int}{Int}
\let\phi=\varphi
\let\epsilon=\varepsilon
\title{On nodal domains in Euclidean balls}
\author{Bernard Helffer}
\author{Mikael Persson Sundqvist}
\address[Bernard Helffer]{ Laboratoire de
Math\'{e}matiques UMR CNRS 8628\\ Universit\'{e} Paris-Sud - B\^{a}t 425\\
F-91405 Orsay Cedex\\ France  and Laboratoire de Math\'ematiques Jean Leray, 
Universit\'e de Nantes, France.}
\email{bernard.helffer@math.u-psud.fr}
\address[Mikael Persson Sundqvist]{Lund University, Department of Mathematical 
Sciences, Box 118, 221\ 00 Lund, Sweden.}
\email{mickep@maths.lth.se}
\subjclass[2010]{35B05; 35P20, 58J50}
\keywords{Nodal domains, Courant theorem, ball, Dirichlet, Neumann}
\begin{document}
\begin{abstract}
\AA. Pleijel (1956) has proved that in the case of the Laplacian  with 
Dirichlet condition, the equality in the Courant nodal theorem (Courant 
sharp situation) can only be true for a finite number of eigenvalues when the 
dimension is $\geq 2$. Recently Polterovich extended the result
to the Neumann problem in two dimensions in the case when the boundary is 
piecewise analytic.
A question coming from the theory of spectral minimal partitions has 
motivated the analysis of the cases when one has equality in Courant's 
theorem. 

We identify the Courant sharp eigenvalues for the Dirichlet and the
Neumann Laplacians in balls in $\mathbb R^d$, $d\geq 2$. It is the first 
result of this type holding in any dimension.
The corresponding result for the Dirichlet Laplacian in the disc in $\mathbb R^2$
was obtained by B.~Helffer, T.~Hoffmann-Ostenhof and 
S.~Terracini.
\end{abstract}

\maketitle

\section{Introduction and main results}
We consider the problem of counting nodal domains of eigenfunctions of the 
self-adjoint realization of the Laplacian, $-\Delta$ in the unit ball
in $\mathbb R^d$. The \enquote{nodal domains}  are the connected components of 
the zeroset of the eigenfunction in the ball.  We consider the Dirichlet 
problem for $d\geq 3$ and
the Neumann problem for $d\geq 2$ (the corresponding results for the
Dirichlet problem for $d=2$ was given in~\cite{HHOT}).

To be more precise, denoting by $\lambda_n$ the $n$th eigenvalue, our goal 
is to discuss the property of Courant sharpness of these operators, that
is the existence of eigenvalues $\lambda_n$ for which there exists an
eigenfunction with exactly $n$ nodal domains.  We recall that Courant's theorem
says that the number of nodal domains, $\mu(\Psi)$, of an eigenfunction $\Psi$
corresponding to $\lambda_n$ is bounded by~$n$. Moreover, it has been proven 
that the number of Courant sharp cases must be finite, see~\cite{Pl} for 
the Dirichlet case and~\cite{Pol} for the Neumann case (in dimension $2$ only 
and for piecewise analytic boundaries). The two first eigenvalues 
are always Courant sharp. We will prove the following.
\begin{thm}
\label{thm:2D}
The only Courant sharp eigenvalues for the Neumann Laplacian for the disc
are $\lambda_1$, $\lambda_2$ and $\lambda_4$.
\end{thm}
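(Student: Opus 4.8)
The plan is to combine explicit knowledge of the Neumann eigenvalues of the disc with Courant's bound and a Pleijel-type asymptotic estimate to reduce the problem to a finite, checkable list of eigenvalues.

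**Setting up the spectrum and the bookkeeping.**

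First I would recall that, after separation of variables in polar coordinates, the Neumann eigenfunctions of the unit disc are of the form $J_k(\sqrt{\lambda}\,r)\,(\cos k\theta \text{ or } \sin k\theta)$, so the eigenvalues are the squares of the zeros of the derivatives $J_k'$ (together with the eigenvalue $0$ coming from $k=0$, the constant function). Writing $j_{k,\ell}'$ for the $\ell$th positive zero of $J_k'$, the nonzero Neumann eigenvalues are $(j_{k,\ell}')^2$, with multiplicity $2$ when $k\geq 1$ and multiplicity $1$ when $k=0$. The first step is therefore to list the eigenvalues in increasing order and compute, for each, its labelling index $n$ (accounting for multiplicities). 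The number of nodal domains of the separated eigenfunction $J_k(\sqrt\lambda r)\cos k\theta$ is $2k\ell$ when $k\geq 1$ (the angular factor contributes $2k$ sectors, the radial factor $\ell$ annuli), and $\ell$ when $k=0$.

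**The Pleijel-type reduction to finitely many eigenvalues.**

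The main structural input is a Weyl-law/Pleijel argument bounding the ratio $\mu(\Psi)/n$ away from $1$ for large $n$. Using the two-term Weyl asymptotics for the Neumann counting function on the disc together with the Faber--Krahn inequality applied to each nodal domain, one obtains $\limsup_{n\to\infty}\mu(\Psi)/n<1$, exactly as in Pleijel's original argument (this is the content of \cite{Pol} in dimension $2$). Quantitatively, this yields an explicit threshold $N_0$ such that no eigenvalue $\lambda_n$ with $n>N_0$ can be Courant sharp. Making this threshold explicit is the crux of turning the finiteness statement into a complete classification: I would track the constants in the Faber--Krahn/Weyl comparison carefully so as to produce a concrete, not-too-large $N_0$.

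**Checking the finite list.**

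With $N_0$ in hand, it remains to examine each eigenvalue up to index $N_0$ and decide whether some eigenfunction in its eigenspace attains exactly $n$ nodal domains. For a simple eigenvalue this is a direct count; for the doubly degenerate eigenvalues ($k\geq 1$) one must maximize the nodal count over the whole two-dimensional eigenspace spanned by $\cos k\theta$ and $\sin k\theta$ (and, more generally, over any linear combination with eigenfunctions from other eigenspaces if eigenvalues coincide numerically). For a generic combination $a\cos k\theta+b\sin k\theta=c\cos k(\theta-\theta_0)$ the nodal structure is the same as for $\cos k\theta$, so the maximal count is still $2k\ell$; I would verify that no superposition can do better, using that the nodal set of a rotated copy has the same combinatorial type. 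Comparing $2k\ell$ (or $\ell$) with the index $n$ for each case then shows that equality holds only at $\lambda_1,\lambda_2,\lambda_4$, and one checks directly that $\lambda_1$ (constant, $1$ domain, $n=1$), $\lambda_2$ (the first $k=1$ pair, $2$ domains, $n=2$) and $\lambda_4$ (the first $k=2$ pair, $4$ domains, $n=4$) are indeed Courant sharp while $\lambda_3$ and all higher listed eigenvalues are not.

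**Main obstacle.**

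I expect the principal difficulty to be twofold: making the Pleijel threshold $N_0$ explicit and small enough to permit a finite hand-check, and correctly handling the degenerate eigenvalues — in particular ruling out that an unexpected linear combination within a multiple eigenspace (or across accidentally equal eigenvalues) produces more nodal domains than the separated eigenfunctions. Controlling the nodal count of such combinations, rather than of the separated solutions alone, is the delicate step that most naive treatments overlook.
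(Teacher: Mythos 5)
Your overall strategy (explicit spectrum, then a quantitative Pleijel-type bound reducing Courant sharpness to a finite check) is not the paper's route, and as written it has a gap that, in the Neumann setting, is fatal: the claimed reduction to finitely many eigenvalues. Pleijel's Faber--Krahn argument does not apply ``to each nodal domain'' of a Neumann eigenfunction: a nodal domain touching the boundary carries Neumann conditions on part of its own boundary, and its ground state energy (a mixed Dirichlet--Neumann eigenvalue) can lie strictly below the Faber--Krahn bound for its area, so the key inequality $\lambda\geq \pi j_{0,1}^2/|D|$ simply fails for boundary domains. This is precisely why Polterovich's proof of the Neumann Pleijel theorem needs the Toth--Zelditch bound on the number of nodal lines meeting the boundary, and both of those results are asymptotic, with non-explicit constants: no concrete threshold $N_0$ can be extracted from them. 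Since producing an explicit $N_0$ is, as you say yourself, the crux of your plan, the plan does not get off the ground. The paper avoids any threshold altogether: a twisting (symmetry-breaking) argument, Lemma~\ref{lem:twist}, rules out all $\lambda_{\ell,m}^N$ with $\ell\geq 1$, $m\geq 2$; the elementary comparison $\lambda_{1,1}^N<\lambda_{0,2}^N$ (via $\Xi_0'=-\Xi_1$) rules out $\ell=0$, $m\geq 2$; and the ordering of the first eight eigenvalues together with $\lambda_{0,2}^N<\lambda_{3,1}^N$ shows that $\lambda_{\ell,1}^N$ has label at least $2\ell+1$ while its eigenfunctions have only $2\ell$ nodal domains, leaving exactly $\lambda_1,\lambda_2,\lambda_4$. (One could salvage an explicit-threshold argument for the disc by exploiting the separated form of the eigenfunctions---boundary nodal domains then number $2\ell=\mathcal O(\sqrt{\lambda})$ out of $2\ell m$ in total---but that is no longer the general argument you invoke, and it already presupposes the structure theory mentioned next.)

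The second issue is your multiplicity bookkeeping. That $(j_{k,\ell}')^2$ has multiplicity exactly $2$ for $k\geq 1$ (equivalently, that every eigenspace consists of separated eigenfunctions with a single angular frequency) is equivalent to the statement that $J_k'$ and $J_{k+p}'$ have no common positive zeros. This is a genuine theorem, not a generic-position remark: the paper proves it (Lemmas~\ref{lem:Xitrans} and~\ref{lem:Xizeros}) using transcendence results of Shidlovskii, extending Ashu's thesis, in parallel with the Bourget--Siegel theorem used in the Dirichlet case. Your proposal flags ``accidentally equal eigenvalues'' as the delicate point but offers no mechanism to exclude them, and both your nodal-count maximization over eigenspaces and every label you compute depend on this fact. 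For the finitely many eigenvalues below a hypothetical threshold this could in principle be settled by rigorous numerics, but since the finite reduction is itself the missing step, the proposal is incomplete on both counts.
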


\begin{thm}
\label{thm:dD}
The only Courant sharp eigenvalues for the Dirichlet and Neumann Laplacians
for the ball in $\mathbb R^d$, $d\geq 3$, are $\lambda_1$ and $\lambda_2$.
\end{thm}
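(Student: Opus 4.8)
The plan is to exploit the complete separation of variables available on the ball. Writing points as $(r,\omega)\in(0,1)\times S^{d-1}$, every eigenfunction attached to angular degree $\ell$ and radial index $m$ has the product form $\Psi(r,\omega)=f_{\ell,m}(r)\,Y(\omega)$, where $f_{\ell,m}(r)=r^{-(d-2)/2}J_\nu(k_{\ell,m}r)$ with $\nu=\ell+\tfrac{d-2}{2}$, and $Y$ ranges over the space $\mathcal H_\ell$ of spherical harmonics of degree $\ell$ on $S^{d-1}$; here $k_{\ell,m}$ is the $m$-th positive zero of $J_\nu$ (Dirichlet), respectively of the relevant boundary functional built from $J_\nu$ (Neumann), so $\lambda_{\ell,m}=k_{\ell,m}^2$ with multiplicity $d_\ell=\dim\mathcal H_\ell$. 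Assuming, as holds here, that distinct pairs $(\ell,m)$ give distinct eigenvalues, I would first record the structural fact that the nodal set of $\Psi=f_{\ell,m}Y$ is the union of the $m-1$ interior spheres $\{f_{\ell,m}=0\}$ with the cone over the nodal set of $Y$. Consequently $\mu(\Psi)=m\cdot\mu_S(Y)$, where $\mu_S(Y)$ is the number of nodal domains of $Y$ on $S^{d-1}$.

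Next I would reduce Courant sharpness to a single counting inequality. The eigenvalue $\lambda_{\ell,m}$ occupies the indices $a_{\ell,m},\dots,a_{\ell,m}+d_\ell-1$, where $a_{\ell,m}=1+\sum_{(\ell',m'):\,\lambda_{\ell',m'}<\lambda_{\ell,m}}d_{\ell'}$. Since every eigenfunction of this block satisfies $\mu(\Psi)\le m\,\Sigma_d(\ell)$, with $\Sigma_d(\ell):=\max_{Y\in\mathcal H_\ell}\mu_S(Y)$, no index in the block can be Courant sharp as soon as $m\,\Sigma_d(\ell)<a_{\ell,m}$. Thus the entire theorem becomes the statement that this strict inequality holds for every $(\ell,m)$ except the pairs producing $\lambda_1$ and $\lambda_2$, together with a direct verification of sharpness in those two cases (the ground state $\ell=0,m=1$ with $\mu=1$ and index $1$, and the first nonconstant block, a linear harmonic, with $\mu=2$ meeting the bottom index $2$).

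For the numerator I would bound $\Sigma_d(\ell)$ by Courant's theorem on the sphere itself: the degree-$\ell$ harmonics form the eigenspace of $-\Delta_{S^{d-1}}$ for the eigenvalue $\ell(\ell+d-2)$, whose top index is $D_\ell=\sum_{k=0}^\ell d_k$, so $\Sigma_d(\ell)\le D_\ell$ (with sharper counts of nodal domains of low-degree harmonics reserved for the borderline small cases). For the denominator I would estimate $a_{\ell,m}$ from below by counting the pairs $(\ell',m')$ with $k_{\ell',m'}<k_{\ell,m}$, using the monotonicity of the zeros in both indices together with quantitative bounds such as $j_{\nu,m}>\nu$ and the McMahon/Olver asymptotics $j_{\nu,m}\approx(m+\tfrac{\nu}{2}-\tfrac14)\pi$, and then summing the multiplicities $d_{\ell'}$, which grow like $\ell'^{\,d-2}$ while $D_\ell\sim\tfrac{2}{(d-1)!}\ell^{d-1}$.

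The main obstacle is precisely this quantitative count: one must show that the number of eigenvalues below $\lambda_{\ell,m}$, weighted by the rapidly growing angular multiplicities, outpaces $m\,D_\ell$ uniformly in $(\ell,m)$. The clean route is a sharpened, non-asymptotic Pleijel comparison: combining the Faber--Krahn lower bound $|D|\ge\omega_d\,j_{(d-2)/2,1}^{\,d}\,\lambda^{-d/2}$ on each (interior) nodal domain with an explicit lower bound on the counting function $N(\lambda)$ yields a threshold $\Lambda_0(d)$ beyond which no $\lambda_n$ is Courant sharp, the Neumann case requiring in addition control of the nodal domains meeting $\partial\Omega$. This cuts the problem down to finitely many pairs $(\ell,m)$. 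The final step is then a finite verification: for each surviving pair I would fix the exact ordering of the relevant Bessel (or Bessel-derivative) zeros numerically, compute $a_{\ell,m}$ and $m\,\Sigma_d(\ell)$ precisely, and confirm the strict inequality, so that only $\lambda_1$ and $\lambda_2$ remain Courant sharp.
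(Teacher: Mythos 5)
Your structural reduction is sound: on the ball every eigenfunction of $\lambda_{\ell,m}$ is a product $f_{\ell,m}(r)Y(\omega)$ with $Y\in\mathcal H_\ell$, hence has $m\,\mu_S(Y)$ nodal domains, and no label in the block can be Courant sharp once $m\,\Sigma_d(\ell)$ is smaller than the minimal label $a_{\ell,m}$. But one of your ``assumed inputs'' is not free: the statement that distinct pairs $(\ell,m)$ give distinct eigenvalues, which underlies the product form, is Bourget's hypothesis (via Siegel's transcendence theorem) in the Dirichlet case, while in the Neumann case for $d\geq 3$ it was not previously known at all --- it is one of the results the paper itself must prove (its Lemma~2.5, resting on the transcendence of the zeros of $\frac{d}{dr}\Xi_\ell^{(d)}$, Lemma~2.4, via Shidlovskii's theory). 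Treating it as a known fact is already a gap on the Neumann side.

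The decisive gap, however, is your endgame. The theorem concerns \emph{every} dimension $d\geq 3$, whereas your strategy --- a Pleijel/Faber--Krahn threshold $\Lambda_0(d)$ followed by a numerical verification of the finitely many surviving pairs --- is finite only for each fixed $d$: the threshold, the list of surviving $(\ell,m)$, and the numerics all depend on $d$, so infinitely many verifications remain. To close this you would need every ingredient (an explicit, non-asymptotic lower bound on the counting function of the ball, the Faber--Krahn comparison, the bound on $\Sigma_d(\ell)$) to be effective and uniform in $d$; this is precisely the step you yourself call ``the main obstacle'' and defer. Matters are worse for Neumann: no Pleijel-type theorem controlling nodal domains touching the boundary was available in dimension $d\geq 3$ (Polterovich's result is purely two-dimensional), and the paper only obtains one for the ball by exploiting the same product structure, and even then only in asymptotic, not effective, form. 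The paper's own proof avoids all such counting: the twisting trick (rotate the inner ball $|x|<\rho_1$ so that the twisted nodal partition can no longer arise from an eigenfunction, then strictly lower the maximal ground-state energy by a Hadamard deformation) eliminates every $\lambda_{\ell,m}$ with $\ell\geq 1$, $m\geq 2$; the antipodal-symmetry version of Courant's theorem extends Leydold's result to $\mathbb S^{d-1}$ and reduces the cases $m=1$, $\ell\geq 2$ to the inequality $1<\binom{\ell+d-3}{d-2}$; and Bessel-zero interlacing ($\lambda_{1,1}<\lambda_{0,2}$) disposes of $\ell=0$, $m\geq 2$. Each step holds for all $d$ simultaneously, with no numerics, which is exactly what your plan lacks.
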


This analysis is motivated by the problem of spectral minimal $k$-partitions, 
where one is interested in minimizing $\max_j \lambda_1(D_j)$ over the 
family $\mathcal D =(D_1,\cdots, D_k)$ of pairwise disjoint open sets in 
a domain $\Omega$, where $\lambda_1(D_j)$ denotes either the 
Dirichlet ground 
state energy (if we analyze the Dirichlet spectral partitions of an open set 
$\Omega$) or the Dirichlet--Neumann ground state energy for the Laplacian in 
$D_j$ with  Neumann condition on 
$\partial D_j \cap \partial \Omega$ and Dirichlet condition on 
the remaining part of $\partial D_j$. There are now many results in the 
two-dimensional (2D) case. We refer to~\cite{BH} for a recent review. 
In higher dimensions much less is done, and we only know of the 
determination of all Courant sharp Dirichlet eigenvalues 
in the cube in three dimensions,~\cite{HK}. We also know less 
about the properties of $k$-minimal partitions in higher dimensions. 
This will not create too much problems below, because we will work with 
explicit nodal domains of eigenfunctions,  which in spherical coordinates 
will be expressed as a product of an interval (in the radial direction) 
by a nodal domain of a spherical harmonics in $\mathbb S^{d-1}$.

In Section~\ref{sec:spectrum} we recall how one  describes the spectrum of the 
Laplace operator.
As a part of the analysis of the Neumann problem, we use and extend a 
recent result on the zeros of derivatives of the Bessel functions 
$J_\nu$, saying that $J_\nu'$ and $J_{\nu+p}'$ have no common positive zeros 
if $\nu\geq 0$ and $p\geq 1$ are integers. This was
proved by M.~Ashu in his Bachelor thesis~\cite{As}.

In Section~\ref{sec:Courant} we discuss Courant sharpness. As a first result,
we use a symmetry argument to extend a result by Leydold (\cite{Ley0,Ley1}) 
from $\mathbb S^2$ to $\mathbb S^{d-1}$, $d\geq 4$,
saying that only the first two eigenvalues of the Laplace--Beltrami operator
on $\mathbb S^{d-1}$ are Courant sharp. We then continue towards the 
proofs of the Theorems~\ref{thm:2D} and~\ref{thm:dD}, by reducing the number
of cases that need special treatment by using what we call a twisting trick. In
short, it says that if the eigenfunction is non-radial, and if the eigenfunction
is zero on a set $|x|=\rho$, $\rho<1$, then one can consider the same
eigenfunction, but where one makes a small rotation of the inner ball $|x|<\rho$, 
breaking the necessary symmetry. We refer to Subsection~\ref{sec:Twist} for the
full details. 
This leaves two families of eigenfunctions to consider. In 
Subsection~\ref{sec:Dirichlet} we finish the proof of Theorem~\ref{thm:dD} in
the case of Dirichlet boundary condition, by using an interlacing property
of zeros of Bessel functions. In Subsection~\ref{sec:Neumann} we finish the
proof of Theorem~\ref{thm:2D} and the Neumann part of Theorem~\ref{thm:dD}.
We remark that the proof of Theorem~\ref{thm:2D} is quite close to the proof 
of the Dirichlet case for the disc~\cite[Section~9]{HHOT}.

In Section~\ref{sec:Pleijel}, we discuss the possible extension of a
theorem by \AA.~Pleijel,~\cite{Pl}. The question is to determine if there exists 
a constant $\gamma<1$ such that, for any infinite sequence of 
eigenpairs $(\lambda_n,u_n)$
\[ 
\lim\sup \frac{\mu(u_n)}{n} \leq \gamma\,.
\]
For the Dirichlet problem, this is indeed the case as proved in the paper of 
B\'erard-Meyer~\cite{BeMe}, which establishes, in any dimension $d\geq 2$ 
for bounded open sets in $\mathbb R^d$ or $d$ dimensional compact 
Riemannian manifolds, the existence of an explicit universal 
constant $\gamma(d)<1$ (extending~\cite{Pe}). This was also solved previously 
for the Neumann problem in dimension 2~\cite{Pol}.

Finally, in Section~\ref{sec:gamma}, we establish new monotonicity 
properties of the function $\gamma(d)$.

\begin{remark}
It would be interesting to consider the problem of minimal $k$-partitions of 
the ball in three dimensions. 
In the case $k=3$, it has been proved in~\cite{HHOT2} that the 
minimal $3$-partition of the sphere $\mathbb S^2$ is up to rotation determined 
by the intersection of $\mathbb S^2$ with three half-planes crossing along the 
vertical axis with 
equal angle $\frac {2\pi}{3}$. It is natural to conjecture that the minimal 
$3$-partition for the ball is up to rotation determined by the intersection 
of the ball with three half-planes crossing along the vertical axis with 
equal angle $\frac {2\pi}{3}$. 
\end{remark}

\section{Spectrum of the Laplace operator in the unit ball in $\mathbb R^d$}
\label{sec:spectrum}

We denote by $-\Delta^D$ and $-\Delta^N$ the Dirichlet and Neumann Laplace
operators, respectively, in the unit ball in $\mathbb R^d$, $d\geq 2$.
The Laplace operator $-\Delta$ can be written as
\[
-\Delta=-\frac{\partial^2}{\partial r^2}-\frac{d-1}{r}\frac{\partial}{\partial r}
+\frac{1}{r^2}(-\Delta_{\mathbb S^{d-1}}),
\]
where $r=|x|$ is the radial variable and $\Delta_{\mathbb S^{d-1}}$ is the
Laplace--Beltrami operator, acting in $L^2(\mathbb S^{d-1})$.

\begin{prop}[{\cite[Theorem~22.1 and Corollary~22.1]{Shubin}}]\label{proshu} 
Assume that $d\geq 2$. 
The spectrum of $-\Delta_{\mathbb S^{d-1}}$ consists of eigenvalues
\[
\ell(\ell+d-2),\quad \ell\in\mathbb{N}\,.
\]
The multiplicity of the eigenvalue $\ell(\ell+d-2)$ is given by
\[
\Lambda_{\ell,d}:=\binom{\ell+d-1}{d-1}-\binom{\ell+d-3}{d-1}\,,
\]
which coincides with the dimension of the space of homogeneous, 
harmonic polynomials of degree $\ell$.
\end{prop}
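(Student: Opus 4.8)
The plan is to identify the eigenfunctions of $-\Delta_{\mathbb S^{d-1}}$ with the restrictions to the sphere of homogeneous harmonic polynomials on $\mathbb R^d$, exploiting the radial decomposition of the Euclidean Laplacian recalled above.

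First I would take a homogeneous polynomial $P$ of degree $\ell$ and write it in spherical coordinates as $P(x)=r^\ell Y(\omega)$, where $\omega\in\mathbb S^{d-1}$ and $Y=P|_{\mathbb S^{d-1}}$. Inserting $P=r^\ell Y$ into
\[
-\Delta=-\frac{\partial^2}{\partial r^2}-\frac{d-1}{r}\frac{\partial}{\partial r}+\frac{1}{r^2}(-\Delta_{\mathbb S^{d-1}})
\]
and using the elementary identity $\bigl(\partial_r^2+\tfrac{d-1}{r}\partial_r\bigr)r^\ell=\ell(\ell+d-2)\,r^{\ell-2}$, a direct computation gives
\[
-\Delta P=r^{\ell-2}\bigl(-\Delta_{\mathbb S^{d-1}}Y-\ell(\ell+d-2)Y\bigr).
\]
Hence $P$ is harmonic if and only if $-\Delta_{\mathbb S^{d-1}}Y=\ell(\ell+d-2)Y$, so the restriction of every degree-$\ell$ harmonic polynomial is an eigenfunction with eigenvalue $\ell(\ell+d-2)$, and distinct $\ell$ yield mutually orthogonal eigenspaces since $-\Delta_{\mathbb S^{d-1}}$ is self-adjoint.

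Next I would show that these eigenfunctions exhaust the spectrum. Writing $\mathcal P_\ell$ for the homogeneous polynomials of degree $\ell$ and $\mathcal H_\ell\subset\mathcal P_\ell$ for the harmonic ones, the key algebraic fact is the direct-sum decomposition $\mathcal P_\ell=\mathcal H_\ell\oplus r^2\mathcal P_{\ell-2}$; iterating it, every polynomial restricts on $\mathbb S^{d-1}$ to a finite sum of restricted harmonic polynomials of degrees at most $\ell$. Since polynomials are dense in $L^2(\mathbb S^{d-1})$ by Stone--Weierstrass, the span of all restricted harmonic polynomials is dense, so no eigenvalue other than the numbers $\ell(\ell+d-2)$ can occur.

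Finally, the multiplicity equals $\dim\mathcal H_\ell$. From the decomposition one reads off $\dim\mathcal H_\ell=\dim\mathcal P_\ell-\dim\mathcal P_{\ell-2}$, and since $\dim\mathcal P_k=\binom{k+d-1}{d-1}$ is the number of degree-$k$ monomials in $d$ variables, this produces the stated formula for $\Lambda_{\ell,d}$. I expect the main obstacle to be the completeness step: one must verify that the restriction map $\mathcal H_\ell\to L^2(\mathbb S^{d-1})$ is injective --- equivalently, that a harmonic polynomial vanishing on the sphere vanishes identically, which follows from its homogeneity --- so that the polynomial dimension count transfers faithfully to the eigenspaces.
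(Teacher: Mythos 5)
Your proposal is correct, but there is nothing in the paper to compare it with: the paper does not prove this proposition, it imports it verbatim from Shubin (Theorem~22.1 and Corollary~22.1). Your argument is the standard one, and it is essentially the proof found in the cited reference: separate variables with the radial decomposition of $-\Delta$, identify restrictions of homogeneous harmonic polynomials as eigenfunctions with eigenvalue $\ell(\ell+d-2)$, get completeness from $\mathcal{P}_\ell=\mathcal{H}_\ell\oplus r^2\mathcal{P}_{\ell-2}$ together with Stone--Weierstrass and self-adjointness, and read off the multiplicity from $\dim\mathcal{H}_\ell=\dim\mathcal{P}_\ell-\dim\mathcal{P}_{\ell-2}=\binom{\ell+d-1}{d-1}-\binom{\ell+d-3}{d-1}$; you also correctly identify and dispose of the injectivity issue for the restriction map (a homogeneous polynomial vanishing on $\mathbb{S}^{d-1}$ vanishes identically). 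Two points deserve to be made explicit in a self-contained write-up. First, the splitting $\mathcal{P}_\ell=\mathcal{H}_\ell\oplus r^2\mathcal{P}_{\ell-2}$ is a theorem, not a triviality; the quickest proof is via the Fischer (apolar) inner product on $\mathcal{P}_\ell$, with respect to which multiplication by $r^2$ and $\Delta$ are mutually adjoint, so that the orthogonal complement of $r^2\mathcal{P}_{\ell-2}$ in $\mathcal{P}_\ell$ is exactly $\ker\bigl(\Delta\colon\mathcal{P}_\ell\to\mathcal{P}_{\ell-2}\bigr)=\mathcal{H}_\ell$. Second, to conclude that the spectrum is exhausted and that each eigenspace equals $\mathcal{H}_\ell|_{\mathbb{S}^{d-1}}$ (rather than merely contains it), you should invoke that $-\Delta_{\mathbb{S}^{d-1}}$ on the compact manifold $\mathbb{S}^{d-1}$ is self-adjoint with compact resolvent, so that eigenfunctions with distinct eigenvalues are orthogonal and any eigenfunction orthogonal to your dense span vanishes; here one also uses that $\ell\mapsto\ell(\ell+d-2)$ is strictly increasing, so the listed eigenvalues are pairwise distinct. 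Neither point is a genuine gap, but both are where the actual content of the cited theorem lies.
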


This leads us to consider the Dirichlet and Neumann eigenvalues of the 
ordinary differential operator
\[
\mathcal L=-\frac{d^2}{dr^2}-\frac{d-1}{r}\frac{d}{dr}+\frac{\ell(\ell+d-2)}{r^2}\,,
\]
acting in $L^2((0,1),r^{d-1}\,dr)\,$.\\
 The general solution to 
$\mathcal Lu=\lambda u$ is given by
\[
u(r)=c_1r^{\frac{2-d}{2}}J_{\frac{1}{2}(2\ell+d-2)}(\sqrt{\lambda}r)
+c_2r^{\frac{2-d}{2}}Y_{\frac{1}{2}(2\ell+d-2)}(\sqrt{\lambda}r)\,,
\]
where $J_\nu$ and $Y_\nu$ denote the Bessel functions of order $\nu$, and of 
first and second kind, respectively. The Bessel functions of the second kind
are too singular at the origin to be considered as eigenfunctions.

To state the next results, we introduce the function
\[
\Xi_{\ell}^{(d)}(r)=r^{\frac{2-d}{2}}J_{\frac{1}{2}(2\ell+d-2)}(r)\,,
\]
which is also denoted $\Xi_{\ell}$ for simplicity.

\begin{prop}
\label{prop:specD}
The spectrum of $-\Delta^D$ in the unit ball in $\mathbb R^d$, $d\geq 2$\,,
consists of eigenvalues
\[
\lambda_{\ell,m}^D=\bigl(\alpha_{\ell,m}^{(d)}\bigr)^2\,,
\quad \ell\in\mathbb{N}\,,\ m\in\mathbb{N}\setminus\{0\},
\]
where $\alpha_{\ell,m}^{(d)}$ denotes the $m$th positive zero of
the function $\Xi_{\ell}^{(d)}$. Each eigenvalue has 
multiplicity $\Lambda_{\ell,d}\,$.
\end{prop}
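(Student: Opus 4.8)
The plan is to diagonalize $-\Delta^D$ by separation of variables, exploiting the rotational invariance that already underlies Proposition~\ref{proshu}. Since the ball is bounded, $-\Delta^D$ has compact resolvent and hence purely discrete spectrum, so it suffices to produce a complete system of eigenfunctions. Because $-\Delta$ commutes with the $O(d)$-action on $L^2$ of the ball, and because $L^2$ of the unit ball factors as $L^2((0,1),r^{d-1}\,dr)\otimes L^2(\mathbb S^{d-1})$, I would first decompose $L^2(\mathbb S^{d-1})=\bigoplus_{\ell\in\mathbb N}\mathcal H_\ell$ into the eigenspaces $\mathcal H_\ell$ of $-\Delta_{\mathbb S^{d-1}}$, where $\dim\mathcal H_\ell=\Lambda_{\ell,d}$ and the eigenvalue is $\ell(\ell+d-2)$. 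On each sector $L^2((0,1),r^{d-1}\,dr)\otimes\mathcal H_\ell$ the Dirichlet Laplacian acts as $\mathcal L\otimes I$, with $\mathcal L$ the radial operator displayed above carrying precisely the potential $\ell(\ell+d-2)/r^2$. Consequently the spectrum of $-\Delta^D$ is the union over $\ell$ of the spectra of these radial problems, each radial eigenvalue contributing a factor $\Lambda_{\ell,d}$ to its multiplicity.

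Next I would analyze the one-dimensional problem $\mathcal L u=\lambda u$ on $L^2((0,1),r^{d-1}\,dr)$ with the boundary behavior selecting the Dirichlet realization. Integration by parts shows $\mathcal L$ is positive, so $\lambda>0$ and we may write $\lambda=(\sqrt\lambda)^2$. The general solution is the stated combination of $r^{(2-d)/2}J_\nu(\sqrt\lambda r)$ and $r^{(2-d)/2}Y_\nu(\sqrt\lambda r)$ with $\nu=\tfrac{1}{2}(2\ell+d-2)$; the indicial exponents at the origin are $\ell$ and $-(\ell+d-2)$, so the $Y_\nu$-branch is excluded by square-integrability at $r=0$, leaving $u(r)=c\,\Xi_\ell^{(d)}(\sqrt\lambda r)$. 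The Dirichlet condition $u(1)=0$ then reads $\Xi_\ell^{(d)}(\sqrt\lambda)=0$, i.e. $\sqrt\lambda=\alpha_{\ell,m}^{(d)}$ for some $m\geq1$, and conversely each such value is a radial eigenvalue whose eigenspace is one-dimensional. Tensoring with $\mathcal H_\ell$ produces the eigenvalue $(\alpha_{\ell,m}^{(d)})^2$ of multiplicity $\Lambda_{\ell,d}$.

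To conclude that these exhaust the spectrum, I would invoke completeness: the spherical harmonics form a complete orthonormal system in $L^2(\mathbb S^{d-1})$ (Proposition~\ref{proshu}), and for each fixed $\ell$ the rescaled radial functions $\{\Xi_\ell^{(d)}(\alpha_{\ell,m}^{(d)} r)\}_{m\geq1}$ form a complete orthogonal system in $L^2((0,1),r^{d-1}\,dr)$; indeed, after absorbing the factor $r^{(2-d)/2}$ the orthogonality reduces to the weight $r\,dr$ and to the classical completeness of Fourier--Bessel series for $J_\nu$. Their products then span $L^2$ of the ball, so no further eigenvalues occur.

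The step I expect to require the most care is the exactness of the multiplicity statement. The argument above yields multiplicity $\Lambda_{\ell,d}$ within the fixed-$\ell$ sector, but the numerical value $(\alpha_{\ell,m}^{(d)})^2$ could a priori coincide with $(\alpha_{\ell',m'}^{(d)})^2$ for some $\ell\neq\ell'$, which would enlarge the multiplicity. Excluding this amounts to showing that $J_\nu$ and $J_{\nu+p}$ share no positive zero for integers $p\geq1$, the orders here differing by $|\ell-\ell'|$. The case $p=1$ follows immediately from the recurrence relations combined with the fact that $J_\nu$ and $J_\nu'$ cannot vanish simultaneously at a positive point; the general case is of Bourget type and can be cited. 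One should also check the self-adjoint realization at the singular endpoint $r=0$, but since the $Y_\nu$-solution fails to be square-integrable there in all but the lowest $(d,\ell)$, the limit-point alternative confirms that regularity at the origin is the correct and only admissible condition.
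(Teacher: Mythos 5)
Your proposal is correct and follows essentially the same route as the paper: the paper treats the radial separation of variables, the exclusion of the $Y_\nu$ branch, and completeness as standard, and states that the only point requiring proof is the exactness of the multiplicity, which it settles---exactly as you do---by the Bourget hypothesis that $J_\nu$ and $J_{\nu+p}$ have no common positive zeros (Watson~\cite{Wa}, \S 15.28, resting on Siegel's transcendence theorem). Your identification of possible cross-$\ell$ coincidences of eigenvalues as the one delicate step is precisely the paper's point, so the two arguments agree in substance.
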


\begin{prop}
\label{prop:specN}
The spectrum of $-\Delta^N$ in the unit ball in $\mathbb R^d$, $d\geq 2$\,,
consists of eigenvalues
\[
\lambda_{\ell,m}^N=\bigl(\beta_{\ell,m}^{(d)}\bigr)^2\,,
\quad  \ell\in\mathbb{N}\,,\ m\in\mathbb{N}\setminus\{0\}\,,
\]
where $\beta_{\ell,m}^{(d)}$ denotes the $m$th positive (non-negative if 
$\ell=0$) zero of the function 
$r\mapsto\frac{d}{dr}\Xi_{\ell}^{(d)}(r)$. 
Each eigenvalue has multiplicity $\Lambda_{\ell,d}\,$.
\end{prop}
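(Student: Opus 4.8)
The plan is to establish that the Neumann spectrum decomposes as a product structure: the radial part governed by the operator $\mathcal L$ and the spherical part governed by $-\Delta_{\mathbb S^{d-1}}$. Since the Laplacian separates in spherical coordinates, an eigenfunction of $-\Delta^N$ can be written as $\Xi_\ell^{(d)}(\sqrt\lambda\,r)\,Y(\omega)$, where $Y$ is a spherical harmonic of degree $\ell$ (an eigenfunction of $-\Delta_{\mathbb S^{d-1}}$ with eigenvalue $\ell(\ell+d-2)$), and the radial factor solves $\mathcal L u = \lambda u$ while being regular at the origin. Regularity at $r=0$ forces the coefficient $c_2$ of the $Y_\nu$ term to vanish (as noted in the excerpt, $Y_\nu$ is too singular), so the radial factor is exactly $\Xi_\ell^{(d)}(\sqrt\lambda\,r)$ up to a constant.

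First I would impose the Neumann boundary condition on such a separated eigenfunction. On the unit sphere the outward normal is the radial direction, so the condition $\partial_n u = 0$ on $\{r=1\}$ reduces precisely to $\frac{d}{dr}\bigl[\Xi_\ell^{(d)}(\sqrt\lambda\,r)\bigr]\big|_{r=1}=0$, i.e. $\sqrt\lambda\,(\Xi_\ell^{(d)})'(\sqrt\lambda)=0$. Hence the admissible values of $\sqrt\lambda$ are exactly the positive zeros of $(\Xi_\ell^{(d)})'$, which I would enumerate as $\beta_{\ell,m}^{(d)}$; this gives $\lambda_{\ell,m}^N=(\beta_{\ell,m}^{(d)})^2$. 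The parenthetical remark about non-negative zeros when $\ell=0$ accounts for the constant eigenfunction at $\lambda=0$, which I would treat as the base case.

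Next I would verify completeness and multiplicity. Completeness follows because the spherical harmonics form an orthonormal basis of $L^2(\mathbb S^{d-1})$ (Proposition~\ref{proshu}) and, for each fixed $\ell$, the regular radial solutions $\{\Xi_\ell^{(d)}(\beta_{\ell,m}^{(d)}r)\}_m$ form a complete system in $L^2((0,1),r^{d-1}\,dr)$ for the Neumann realization of $\mathcal L$ — this is the standard self-adjointness and spectral resolution of a regular Sturm--Liouville problem in the radial variable. The multiplicity of $\lambda_{\ell,m}^N$ is then inherited from the $\Lambda_{\ell,d}$-dimensional eigenspace of $-\Delta_{\mathbb S^{d-1}}$; I would also note that a priori different pairs $(\ell,m)$ could collide and contribute to the same eigenvalue, but this does not affect the stated description of the spectrum as the collection $\{(\beta_{\ell,m}^{(d)})^2\}$ with the indicated multiplicities.

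The main obstacle, in my view, is the careful justification of the boundary condition and the domain of the radial operator: one must confirm that the separated functions $\Xi_\ell^{(d)}(\sqrt\lambda\,r)Y(\omega)$ genuinely lie in the form domain of $-\Delta^N$ and that the pointwise Neumann condition on the sphere is equivalent to the radial Neumann condition $(\Xi_\ell^{(d)})'(\sqrt\lambda)=0$, including a check that no spurious eigenvalues are introduced at $r=0$ by the singularity of the coefficient $\frac{d-1}{r}$. This is a limit-point/limit-circle analysis at the regular singular endpoint $r=0$: one shows that the weight $r^{d-1}$ together with the potential $\ell(\ell+d-2)/r^2$ selects the single square-integrable solution $\Xi_\ell^{(d)}$, so that no additional boundary condition at the origin is needed and the only free parameter is the zero of the derivative at $r=1$. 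Once this endpoint analysis is in place, the rest is a direct identification and the result follows.
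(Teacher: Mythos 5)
Your separation of variables, the reduction of the Neumann condition to $(\Xi_\ell^{(d)})'(\sqrt{\lambda})=0$, and the completeness argument via spherical harmonics and the radial Sturm--Liouville problem are all correct, but this is exactly the part the paper regards as standard: it states that ``the only statement in these propositions that needs a proof is that of the multiplicity of the eigenvalues.'' The genuine gap is that you dismiss precisely that point. You write that a priori different pairs $(\ell,m)$ could collide but that this ``does not affect the stated description'' --- it does. If $\bigl(\beta_{\ell,m}^{(d)}\bigr)^2=\bigl(\beta_{\ell',m'}^{(d)}\bigr)^2$ for some $\ell'\neq\ell$, then the corresponding eigenspace contains the mutually orthogonal families $\Xi_\ell^{(d)}(\beta_{\ell,m}^{(d)}r)Y_\ell(\omega)$ and $\Xi_{\ell'}^{(d)}(\beta_{\ell',m'}^{(d)}r)Y_{\ell'}(\omega)$, so the eigenvalue has multiplicity $\Lambda_{\ell,d}+\Lambda_{\ell',d}$ and the assertion ``each eigenvalue has multiplicity $\Lambda_{\ell,d}$'' fails. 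The proposition is therefore equivalent to the statement that $r\mapsto\frac{d}{dr}\Xi_{\ell}^{(d)}(r)$ and $r\mapsto\frac{d}{dr}\Xi_{\ell+p}^{(d)}(r)$, $p\geq 1$, have no common positive zeros, and this is not a soft fact that follows from the spectral framework: it is the Neumann analogue of Bourget's hypothesis (whose Dirichlet version rests on Siegel's transcendence theorem).

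The paper's proof of this non-collision is the real content of its argument, and nothing in your proposal produces it. The paper first shows (Lemma~\ref{lem:Xitrans}, via Shidlovskii's transcendence results for the functions $K_\nu$, their derivatives, and their logarithmic derivatives) that every positive zero of $\frac{d}{dr}\Xi_\ell^{(d)}$ is transcendental: at such a zero, \eqref{eq:Xiellprime} forces $K_{\ell+d/2-1}'(r)/K_{\ell+d/2-1}(r)=-\ell/r$, which would be algebraic if $r$ were algebraic, a contradiction. It then shows (Lemma~\ref{lem:Xizeros}) that a common positive zero of $(\Xi_\ell^{(d)})'$ and $(\Xi_{\ell+p}^{(d)})'$ would, after repeated use of the recursion formulas \eqref{eq:Xip1}--\eqref{eq:Xirec}, either violate Cauchy uniqueness or make $r$ a root of a nonvanishing polynomial with rational coefficients, contradicting the transcendence lemma. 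By contrast, the limit-point/limit-circle analysis at $r=0$ that you single out as the main obstacle is routine. To complete your proof you would need to supply these two lemmas (or, for $d=2$, cite Ashu's result and then extend it to all $d$), not merely the endpoint analysis.
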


The only statement in these propositions that needs a proof is that of the 
multiplicity of the eigenvalues. For the Dirichlet case the needed result 
is given in~\cite[\S 15.28]{Wa}. It says  that the Bessel functions $J_\nu$
and $J_{\nu+p}$ do not have any common positive zeros. This was conjectured
by Bourget (1866), and follows from  a deep result obtained by  
Siegel~\cite{Sie} in 1929. He proved that if $r>0$ is an algebraic number, 
and $\nu\in\mathbb Q$, then $J_\nu(r)$ is not an algebraic number.

The corresponding result for the Neumann problem was solved recently in the
case $d=2$ in Ashu's Bachelor thesis,~\cite{As}.
In this particular case the statement is that $J_\nu'$ and $J_{\nu+p}'$ have 
no common positive zeros. Again, there is a deep result behind, given 
in~\cite[page~217]{Sh}, which we will come back to in the proof of the 
first lemma below.

\begin{lemma}
\label{lem:Xitrans}
Assume that $d\geq 2$ and that $\ell\in\mathbb N $. Then the positive
zeros of the function $\Xi_{\ell}^{(d)}$ are transcendental numbers.
\end{lemma}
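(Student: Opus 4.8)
The plan is to reduce the statement to the transcendence theorem of Siegel recalled above. First I would note that the prefactor $r^{\frac{2-d}{2}}$ is nonzero for every $r>0$, so the positive zeros of $\Xi_{\ell}^{(d)}$ coincide exactly with the positive zeros of the Bessel function $J_\nu$ of index
\[
\nu=\tfrac{1}{2}(2\ell+d-2)=\ell+\tfrac{d-2}{2}\,.
\]
The decisive observation is that this index is rational for every admissible pair $(\ell,d)$: when $d$ is even, $\tfrac{d-2}{2}$ is an integer and $\nu$ is a nonnegative integer, while when $d$ is odd, $\tfrac{d-2}{2}$ is a half-integer and $\nu$ is a positive half-integer. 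In all cases $\nu\in\mathbb Q$.

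With this in hand the argument is immediate and proceeds by contraposition. Let $r_0>0$ be a positive zero of $\Xi_{\ell}^{(d)}$, so that $J_\nu(r_0)=0$. Siegel's theorem~\cite{Sie} asserts that if $r>0$ is algebraic and $\nu\in\mathbb Q$, then $J_\nu(r)$ is transcendental. Suppose, for contradiction, that $r_0$ were algebraic; since $\nu\in\mathbb Q$, the value $J_\nu(r_0)$ would then be transcendental, contradicting $J_\nu(r_0)=0$, which is rational and hence algebraic. Therefore $r_0$ must be transcendental, which is the assertion.

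I do not expect any genuine obstacle in the argument itself, since the entire analytic content is carried by Siegel's deep result, which is invoked as a black box. The only point requiring a little care is the bookkeeping of the index $\nu$, namely checking that $\nu$ remains rational across the two parity classes of $d$ and for all $\ell\in\mathbb N$; but this follows at once from the explicit formula for $\nu$ displayed above. The same mechanism—a zero of the function is an algebraic (indeed rational) number, so its argument cannot be algebraic by Siegel—is exactly what will be needed for the companion transcendence statement used in the Neumann analysis.
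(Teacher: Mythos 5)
Taken literally, the statement you were given is one you prove correctly: for $r>0$ the prefactor $r^{(2-d)/2}$ never vanishes, the order $\nu=\ell+\tfrac{d-2}{2}$ is rational, and Siegel's theorem, in the form the paper itself quotes it, forbids a positive algebraic zero of $J_\nu$. (One caveat, which you share with the paper's quotation of Siegel: for odd $d$ the order $\nu$ is a half-odd-integer, where $J_\nu$ is an elementary function and the classical Siegel statement does not apply as such; non-vanishing at algebraic points then has to be extracted from Lindemann--Weierstrass instead.)

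However, your proof and the paper's proof establish different statements, and the difference is substantive. The lemma as printed carries a misprint: the paper's own proof begins ``assume that $r>0$ is an algebraic zero of $r\mapsto\frac{d}{dr}\Xi_{\ell}^{(d)}(r)$'', and every subsequent use of the lemma --- Cases 2 and 3 in the proof of Lemma~\ref{lem:Xizeros}, and the assertion that Proposition~\ref{prop:specN} follows --- applies it to zeros of the \emph{derivative} $\frac{d}{dr}\Xi_{\ell}^{(d)}$, i.e.\ to the Neumann eigenvalues, which is what this part of the paper is about. Your mechanism cannot reach that version: by \eqref{eq:Xip1}, vanishing of $\frac{d}{dr}\Xi_{\ell}^{(d)}$ at $r$ is the relation $\ell J_\nu(r)=rJ_{\nu+1}(r)$ between two Bessel values, and knowing that each of $J_\nu(r)$, $J_{\nu+1}(r)$ is transcendental at algebraic $r$ yields no contradiction with such a relation. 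What the paper uses instead is the stronger Siegel--Shidlovskii input: writing $J_\nu(r)=\frac{1}{\Gamma(\nu+1)}(r/2)^\nu K_\nu(r)$, at algebraic $r>0$ the numbers $K_\nu(r)$, $K_\nu'(r)$, and crucially the ratio $K_\nu'(r)/K_\nu(r)$, are all transcendental \cite[Theorem~6.3 and p.~217]{Sh}; then \eqref{eq:Xiellprime} shows that a zero of the derivative at an algebraic $r$ would force $K_\nu'(r)/K_\nu(r)=-\ell/r$, which is algebraic --- a contradiction. (Only for $\ell=0$ does your route survive, since $\frac{d}{dr}\Xi_{0}^{(d)}=-\Xi_{1}^{(d)}$ by \eqref{eq:Xip1}, so those derivative zeros are zeros of $J_{d/2}$.) Consequently, your closing claim --- that the same value-transcendence mechanism is ``exactly what will be needed for the companion transcendence statement used in the Neumann analysis'' --- is precisely where the gap lies: the Neumann statement \emph{is} the derivative version, and it is exactly the place where transcendence of Bessel values alone is insufficient and the logarithmic-derivative (algebraic independence) result must be invoked.
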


\begin{proof}
The functions $K_\nu$  (not to be mixed up with the modified Bessel
functions) are introduced in~\cite{Sh} via the identity
\[
J_\nu(r)=\frac{1}{\Gamma(\nu+1)}\Bigl(\frac{r}{2}\Bigr)^\nu K_\nu(r)\,.
\]
We express the derivative of $\Xi_{\ell}^{(d)}$ in terms of these $K$ functions,
\begin{equation}
\label{eq:Xiellprime}
\frac{d}{dr}\Xi_\ell^{(d)}(r)=r^{-d/2}\frac{1}{\Gamma(\ell+d/2)}
\Bigl(\frac{r}{2}\Bigr)^{\ell+d/2-1}
\biggl[
\ell K_{\ell+d/2-1}(r)+r K_{\ell+d/2-1}'(r)
\biggr]\,.
\end{equation}
Assume that $r>0$ is an algebraic zero of 
$r\mapsto\frac{d}{dr}\Xi_{\ell}^{(d)}(r)$. 
Then both $K_{\ell+d/2-1}(r)$ and $K_{\ell+d/2-1}'(r)$ 
are transcendental according to~\cite[Theorem~6.3]{Sh}. In particular they
are non-zero. However, as 
noted in~\cite[page~217]{Sh}, also $K_{\ell+d/2-1}'(r)/K_{\ell+d/2-1}(r)$ is
transcendental. But then $\ell/r$ is transcendental by~\eqref{eq:Xiellprime}. 
Since $\ell$ is an integer and $r$ was assumed to be algebraic, 
this is a contradiction.
\end{proof}

Proposition~\ref{prop:specN} is a direct consequence of this lemma.

\begin{lemma}
\label{lem:Xizeros}
Assume that $d\geq 2$, $\ell\in\mathbb N$ and  $p \in \mathbb N \setminus \{0\}$. 
Then the functions $r\mapsto \frac{d}{dr}\Xi_{\ell}^{(d)}$ and 
$r\mapsto \frac{d}{dr}\Xi_{\ell+p}^{(d)}$ have
no common positive zeros.
\end{lemma}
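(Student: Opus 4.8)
The plan is to pass from the Bessel functions to the normalized functions $K_\nu$ of~\eqref{eq:Xiellprime} and then eliminate. Set $\mu=\ell+\tfrac{d}{2}-1$, so that $\Xi_{\ell}^{(d)}$ carries the order $\mu$ and $\Xi_{\ell+p}^{(d)}$ the order $\mu+p$. Reading off the bracket in~\eqref{eq:Xiellprime} (whose prefactor is nonzero for $r>0$), a positive number $r_0$ is a zero of $\frac{d}{dr}\Xi_{\ell}^{(d)}$ exactly when $\ell K_\mu(r_0)+r_0K_\mu'(r_0)=0$, and a zero of $\frac{d}{dr}\Xi_{\ell+p}^{(d)}$ exactly when $(\ell+p)K_{\mu+p}(r_0)+r_0K_{\mu+p}'(r_0)=0$. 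I would assume for contradiction that some $r_0>0$ satisfies both. By the proof of Lemma~\ref{lem:Xitrans}, every positive zero of such a derivative is transcendental, so $r_0$ is transcendental; this is the one external input I would use.

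Next I would record two contiguity relations for these functions (here $K_\nu(r)={}_0F_1(;\nu+1;-r^2/4)$): the lowering relation $K_\nu'(r)=-\frac{r}{2(\nu+1)}K_{\nu+1}(r)$ and the three-term relation $K_{\nu-1}(r)=K_\nu(r)-\frac{r^2}{4\nu(\nu+1)}K_{\nu+1}(r)$, both immediate from the standard recurrences for $J_\nu$. The first relation removes the derivatives from the two conditions: the condition at $\mu+p$ becomes $(\ell+p)K_{\mu+p}(r_0)=\frac{r_0^2}{2(\mu+p+1)}K_{\mu+p+1}(r_0)$, and the condition at $\mu$ gives $K_\mu'(r_0)=-\frac{\ell}{r_0}K_\mu(r_0)$, hence $K_{\mu+1}(r_0)=\frac{2\ell(\mu+1)}{r_0^2}K_\mu(r_0)$. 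Note $K_\mu(r_0)\neq0$: otherwise the condition at $\mu$ forces $K_\mu'(r_0)=0$ as well, and then $K_\mu\equiv0$ by its second-order differential equation.

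The elimination step is then to set $y_j:=K_{\mu+j}(r_0)/K_\mu(r_0)$. The three-term relation (shifted and solved for the top index) yields $y_0=1$, $y_1=2\ell(\mu+1)/r_0^2$ and $y_{j+1}=\frac{4(\mu+j)(\mu+j+1)}{r_0^2}(y_j-y_{j-1})$, so each $y_j$ is the value at $r_0$ of a fixed rational function in $\mathbb{Q}(r)$ (the coefficients are rational because $\mu\in\mathbb{Q}$). Dividing the condition at $\mu+p$ by $K_\mu(r_0)$ turns it into $R(r_0)=0$, where $R(r):=(\ell+p)y_p(r)-\frac{r^2}{2(\mu+p+1)}y_{p+1}(r)\in\mathbb{Q}(r)$ has its only pole at $r=0$.

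The main obstacle, and the only place where real work is needed, is to show $R\not\equiv0$; linear algebra alone leaves $R\equiv0$ as a formal possibility. I would settle this by a Laurent expansion at $r=0$. Induction on the recursion gives $y_j(r)\sim c_jr^{-2j}$ with $c_1=2\ell(\mu+1)$ and $c_{j+1}=4(\mu+j)(\mu+j+1)c_j$, all nonzero when $\ell\geq1$ since $\mu+j>0$. The two terms of $R$ are then both of order $r^{-2p}$, and using $c_{p+1}=4(\mu+p)(\mu+p+1)c_p$ their leading coefficients combine to $(\ell-p-2\mu)c_p=-(\ell+p+d-2)c_p\neq0$. Hence $R\not\equiv0$, its zeros are algebraic (roots of a polynomial over $\mathbb{Q}$), and this contradicts the transcendence of $r_0$. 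For $\ell=0$ the condition at $\mu$ forces $K_{\mu+1}(r_0)=0$, so $y_1=0$ and all orders shift by one; the same computation applies for $p\geq2$, while for $\ell=0$, $p=1$ one finds directly that $R$ is the nonzero constant $2(\mu+1)$, which already contradicts $R(r_0)=0$.
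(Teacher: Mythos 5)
Your proof is correct, and it rests on the same two pillars as the paper's own argument: the transcendence of a hypothetical common zero $r_0$ (via the proof of Lemma~\ref{lem:Xitrans} --- correctly cited as the \emph{proof}, since the statement of that lemma nominally concerns $\Xi_\ell^{(d)}$ rather than its derivative, while its proof establishes the assertion for $\frac{d}{dr}\Xi_\ell^{(d)}$, which is what both you and the paper use) and a recurrence-based elimination showing that $r_0$ would have to satisfy a nontrivial polynomial equation over $\mathbb{Q}$. The difference is in execution. The paper stays with the functions $\Xi_\ell$ and the recurrences \eqref{eq:Xip1}--\eqref{eq:Xirec}, reducing the two derivative conditions to a homogeneous $2\times 2$ system in $\bigl(\Xi_\ell(r_0),\Xi_{\ell+1}(r_0)\bigr)$; it must then either contradict Cauchy uniqueness (if both components vanish) or show that the determinant is a nonzero rational function of $r_0$, which it does by tracking the top-degree term in $1/r$ and asserting that no cancellation can occur. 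You instead normalize to the ${}_0F_1$-functions $K_\nu$ of \eqref{eq:Xiellprime}, observe $K_\mu(r_0)\neq 0$, and pass to the ratios $y_j=K_{\mu+j}(r_0)/K_\mu(r_0)$; this collapses the elimination to a single rational function $R\in\mathbb{Q}(r)$, and its non-vanishing follows from the explicit recursion $c_{j+1}=4(\mu+j)(\mu+j+1)c_j$ for the leading Laurent coefficients, ending in the leading coefficient $(\ell-p-2\mu)c_p=-(\ell+p+d-2)c_p\neq 0$. What your route buys is rigor exactly where the paper is sketchiest: the claim \enquote{no such term can occur elsewhere} becomes a transparent computation, and the Cauchy-uniqueness alternative of the paper's Case 3 is absorbed once and for all into the single observation $K_\mu(r_0)\neq 0$. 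What the paper's route buys is economy: it needs no change of functions, and its case $\ell=0$, $p=1$ is a one-line Cauchy-uniqueness argument, whereas you must compute that $R$ degenerates to the nonzero constant $2(\mu+1)$ there. I verified your contiguity relations, the recursion for the $y_j$, the nonvanishing of the $c_j$ (using $\mu+j>0$), and the two special $\ell=0$ computations; all constants are right, so the proposal stands as a complete proof.
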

Before giving the proof, we recall some recursion formulas for the
Bessel functions, valid for all $\nu\in\mathbb R$ and positive $r$,
\begin{align}
\label{eq:Jp1}
J_\nu'(r)&=\frac{\nu}{r} J_\nu(r)-J_{\nu+1}(r)\,,\\
\label{eq:Jp2}
J_{\nu}'(r)&=-\frac{\nu}{r} J_{\nu}(r)+ J_{\nu-1}(r)\,,\\
\label{eq:Jrec}
J_{\nu+1}(r)&=\frac{2\nu}{r}J_{\nu}(r)-J_{\nu-1}(r)\,.
\end{align}

\begin{proof}[{Proof of Lemma~\ref{lem:Xizeros}}]
By~\eqref{eq:Jp1}--\eqref{eq:Jrec}, we get the corresponding
formulas for $\Xi_{\ell}\,$,
\begin{align}
\label{eq:Xip1}\Xi_{\ell}'(r)&=\frac{\ell}{r} \Xi_{\ell}(r)-\Xi_{\ell+1}(r)\,,
\quad \ell\geq 0\,,\\
\label{eq:Xip2}\Xi_{\ell}'(r)&=-\frac{\ell+d-2}{r}\Xi_{\ell}(r)+\Xi_{\ell-1}(r)\,,
\quad \ell\geq 1\,,\\
\label{eq:Xirec} \Xi_{\ell}(r)&=\frac{2\ell+d-4}{r}\Xi_{\ell-1}(r)-\Xi_{\ell-2}(r)\,,
\quad \ell\geq 2\,.
\end{align}

We divide the proof into different cases, and do the proof by contradiction,
using recursion formulas and Lemma~\ref{lem:Xitrans}.

\emph{Case 1, $\ell=0$ and $p=1\,$}:

If, for $r>0$, $\Xi_0'(r)=\Xi_1'(r)=0\,$, 
then~\eqref{eq:Xip1} with $\ell=0$ implies that $\Xi_{1}(r)=0\,$, 
which contradicts Cauchy uniqueness.

\emph{Case 2, $\ell=0$ and $p\geq 2\,$}:

Assume that $r>0$ is a zero of $\Xi_{0}'$ and $\Xi_{p}'\,$. As in Case~1, 
we find that $\Xi_{1}(r)=0$, and so by~\eqref{eq:Xirec}, $\Xi_{2}(r)=-\Xi_0(r)\,$. 
One application of~\eqref{eq:Xip2} gives
\[
0=\Xi_{p}'(r)=-\frac{p+d-2}{r}\, \Xi_p(r)+\Xi_{p-1}(r)\, .
\]
Next, we use~\eqref{eq:Xirec} several times to reduce the right-hand side to
an expression involving $\Xi_2(r)$ and $\Xi_1(r)$ only. After $p-2$ applications
we find a polynomial $Q$ in the variable $1/r$ times $\Xi_2(r)$ only, since 
$\Xi_1(r)=0\,$. The highest degree term of the polynomial is
\[
-\frac{p+d-2}{r}\, \frac{2p+d-4}{r}\, \frac{2p+d-6}{r}\cdots \frac{2p+d-(2p-2)}{r}\,.
\]
Since $\Xi_2(r)=-\Xi_0(r)\,$, we find that
\[
0= Q(1/r)\Xi_0(r)\,,
\]
where $Q$ is a non-vanishing polynomial with rational coefficients. 
Since $r$ is transcendental by Lemma~\ref{lem:Xitrans}, $Q(1/r)\neq 0\,$. 
But $\Xi_0(r)\neq 0$ by Cauchy uniqueness, so we end up at a contradiction and
conclude that $\Xi_0'$ and $\Xi_p'$ have no common positive zero.

\emph{Case 3, $\ell\geq 1$ and $p\geq 1\,$}:

Again, assume that $r>0$ is a zero of $\Xi_{\ell}'$ and $\Xi_{\ell+p}'\,$.
This means, using~\eqref{eq:Xip1} and~\eqref{eq:Xip2} respectively,
\begin{align}
\label{eq:Xihelp}
0&=\Xi_{\ell}'(r)=-\Xi_{\ell+1}(r)+\frac{\ell}{r}\Xi_{\ell}(r)\,,\\
\nonumber
0&=\Xi_{\ell+p}'(r)=-\frac{\ell+p+d-2}{r}\Xi_{\ell+p}(r)+\Xi_{\ell+p-1}(r)\,.
\end{align}
We use~\eqref{eq:Xirec} repeatedly, to reduce the second equation
so that it involves only $\Xi_{\ell}(r)$ and $\Xi_{\ell+1}(r)$, with polynomial
(in the variable $1/r$) coefficients in front. The highest degree (in $1/r$) 
coefficient in front of $\Xi_{\ell+1}(r)$ will, after $p-1$ steps, become
\[
-\frac{\ell+p+d-2}{r}\frac{2\ell+2p+d-4}{r}\frac{2\ell+2p+d-6}{r}
\cdots\frac{2\ell+2p+d-2p}{r}\,,
\]
and once reduced, while calculating the determinant of the resulting system, 
this term will be multiplied with $\ell/r$ (that is in front of 
$\Xi_{\ell}(r)$ in~\eqref{eq:Xihelp}), 
which will higher its degree (in $1/r$) by one. No such term can occur 
elsewhere, and thus for the determinant of the system to be zero, $r$ must 
solve a polynomial equation with rational coefficients, so $r$ is 
algebraic. That contradicts Lemma~\ref{lem:Xitrans}. The other possibility is 
that $\Xi_{\ell}(r)=\Xi_{\ell+1}(r)=0\,$. But that would imply that 
$\Xi_{\ell}(r)=\Xi_{\ell}'(r)=0\,$, which, again, contradicts the 
Cauchy uniqueness.
\end{proof}

\section{Courant sharpness}
\label{sec:Courant}
\subsection{The result on $\mathbb S^{d-1}$}
We first analyze the case of the sphere and extends Leydold's result to 
$\mathbb S^{d-1}$ for $d\geq 3$.
\begin{thm}
\label{thm:CourantSphere}
If $d\geq 3$, the only Courant sharp cases for the Laplace--Beltrami operator on 
$\mathbb S^{d-1}$ correspond to the two first eigenvalues.
\end{thm}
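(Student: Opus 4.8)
The plan is to convert the statement into a combinatorial comparison between the number of nodal domains of a spherical harmonic and the label of its eigenvalue in the spectral list, and then to anchor everything on Leydold's two-dimensional result.

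First I would fix $\ell\ge 0$ and locate the eigenvalue $\ell(\ell+d-2)$ in the list $\lambda_1\le\lambda_2\le\cdots$ counted with multiplicity. Using $\Lambda_{k,d}=\binom{k+d-1}{d-1}-\binom{k+d-3}{d-1}$ from Proposition~\ref{proshu} and telescoping the sum, one gets
\[
S_\ell:=\sum_{k=0}^{\ell-1}\Lambda_{k,d}=\binom{\ell+d-2}{d-1}+\binom{\ell+d-3}{d-1},
\]
so $\ell(\ell+d-2)$ occupies the labels $n$ with $S_\ell+1\le n\le S_{\ell+1}$. Hence this eigenvalue is Courant sharp only if some degree-$\ell$ harmonic $Y$ satisfies $\mu(Y)=n$ for one of these labels; since the smallest admissible label is $S_\ell+1$, it suffices to prove the one-sided bound
\[
\mu(Y)\le S_\ell\qquad\text{for every degree-}\ell\text{ harmonic }Y,\ \ell\ge 2,
\]
because it forces $\mu(Y)<S_\ell+1\le n$ and excludes equality. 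The values $\ell=0,1$ supply the two genuinely Courant sharp eigenvalues (constants with one domain at label $1$, linear forms with two domains at label $2$), so only $\ell\ge 2$ must be ruled out.

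For $d=3$ one computes $S_\ell=\binom{\ell+1}{2}+\binom{\ell}{2}=\ell^2$, and the required estimate $\mu(Y)\le\ell^2$ for $\ell\ge2$ is precisely Leydold's theorem~\cite{Ley0,Ley1}, which I would take as the base case. The remaining task is to propagate this bound to $d\ge 4$. Here I would exploit the $SO(d-1)$-symmetry of the polar decomposition $\omega=(\sin\theta\,\omega',\cos\theta)$, $\omega'\in\mathbb S^{d-2}$. Separating variables produces the harmonics $f_{\ell,k}(\theta)\,Z_k(\omega')$, with $Z_k$ a degree-$k$ harmonic on $\mathbb S^{d-2}$ and $f_{\ell,k}$ a Gegenbauer factor carrying $\ell-k$ interior sign changes; their nodal domains organize as a product of $\theta$-bands with the domains of $Z_k$, which shows that $S_\ell$ is of the right order of magnitude and identifies the target. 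The plan is then to bound $\mu(Y)$ for an \emph{arbitrary} $Y$ by slicing $\mathbb S^{d-1}$ along $\theta$, applying the $(d-1)$-dimensional estimate fibrewise, and closing the induction through the arithmetic relation between $S_\ell^{(d)}$ and the counts on $\mathbb S^{d-2}$, with $d=3$ furnished by Leydold.

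I expect the genuine difficulty to be exactly this upper bound for non-separated harmonics. The extremal spherical harmonics are \emph{not} the separated products — already on $\mathbb S^2$ the configurations realizing $\ell^2$ domains are more intricate than any single product $f_{\ell,m}(\theta)\cos(m\phi)$, which only reaches order $\ell^2/2$ — so the slicing must be carried out without assuming any symmetry of $Y$, and one must control how nodal domains merge across neighbouring values of $\theta$. Turning the heuristic that no harmonic can beat the reduced count into a rigorous inequality, uniformly in $\ell$ and $d$, is the crux of the argument, exactly as it was in the original two-dimensional analysis.
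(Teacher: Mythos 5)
Your combinatorial reduction is sound: the eigenvalue $\ell(\ell+d-2)$ has minimal label $S_\ell+1$ with $S_\ell=\binom{\ell+d-2}{d-1}+\binom{\ell+d-3}{d-1}$, so excluding Courant sharpness for $\ell\ge 2$ amounts to proving $\mu(Y)\le S_\ell$ for every degree-$\ell$ harmonic $Y$. The genuine gap is the step you propose for $d\ge 4$. The fibrewise/slicing induction does not get off the ground: the restriction of a general degree-$\ell$ harmonic $Y$ to a latitude sphere $\{\theta=c\}\cong\mathbb S^{d-2}$ equals $\sum_{k=0}^{\ell}f_{\ell,k}(c)\,Z_k(\omega')$, a linear combination of spherical harmonics of \emph{all} degrees $k\le\ell$, hence not an eigenfunction on $\mathbb S^{d-2}$, so the $(d-1)$-dimensional estimate cannot be applied to the fibres at all. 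Even granting some fibrewise control, the number of connected components of $\{Y\neq 0\}$ is not determined by the slices: one must track how components merge and split as $\theta$ varies, and you give no mechanism for this; you explicitly label it \enquote{the crux}. So the hardest part of the theorem is assumed rather than proved, and this route is essentially as hard as the original problem --- counting nodal domains of non-separated harmonics is delicate enough that even on $\mathbb S^2$ the sharp maximum is only conjectural.

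The paper closes exactly this gap with a symmetry argument that applies to arbitrary, non-separated eigenfunctions and needs no induction on the dimension. Every eigenfunction $\psi_\ell$ of the eigenvalue $\ell(\ell+d-2)$ is the restriction to $\mathbb S^{d-1}$ of a homogeneous harmonic polynomial of degree $\ell$, hence satisfies $\psi_\ell(-\omega)=(-1)^\ell\psi_\ell(\omega)$: it is automatically even or odd under the antipodal map. Courant's theorem with symmetry (Theorem~\ref{thm:Courantsymmetry}) then bounds $\mu(\psi_\ell)$ by twice the smallest label of the eigenvalue \emph{inside its parity subspace}; since the multiplicities of the same-parity lower degrees telescope to $\binom{\ell+d-3}{d-1}$, this gives
\[
\mu(\psi_\ell)\le 2\biggl[\binom{\ell+d-3}{d-1}+1\biggr]\,.
\]
Comparing with the full label $S_\ell+1$ and using Pascal's rule, Courant sharpness would force $1\ge\binom{\ell+d-3}{d-2}$, which is impossible for $\ell\ge 2$ and $d\ge 3$. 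Note that this uses only the structure of Leydold's symmetry argument, not his quantitative bound on $\mathbb S^2$; to repair your proof, replace the slicing induction by this antipodal-parity argument, which is uniform in $d$.
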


In the proof we need the following version of Courant's theorem with symmetry 
(see for example~\cite[Subsection~2.4]{BH}) which we also prove for the
sake of completeness.

\begin{thm}
\label{thm:Courantsymmetry}
Given an eigenfunction which is symmetric or antisymmetric with respect to the 
antipodal map, the number of its nodal domains is
not greater than two times the smallest labeling of the
corresponding eigenvalue inside its symmetry space.
\end{thm}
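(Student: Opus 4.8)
The plan is to run the classical proof of Courant's nodal theorem, but with the min-max principle applied inside the symmetry subspace and with the extra geometric input that the antipodal map $\sigma(x)=-x$ is an isometry of $\mathbb S^{d-1}$ commuting with $-\Delta_{\mathbb S^{d-1}}$. Accordingly, I would decompose $L^2(\mathbb S^{d-1})=H_+\oplus H_-$ into the subspaces of functions that are even, respectively odd, under $\sigma$; the operator preserves each $H_\pm$, and its eigenvalues there can be listed as $\mu_1\le\mu_2\le\cdots$, counted with their multiplicity inside that subspace. Suppose $u$ is symmetric with eigenvalue $\lambda$ (the antisymmetric case is verbatim the same), and let $k$ be the smallest index with $\mu_k=\lambda$, so that the $k-1$ eigenfunctions $\psi_1,\dots,\psi_{k-1}\in H_+$ below $\lambda$ span the space $E_{<\lambda}\cap H_+$ of $H_+$-eigenfunctions with eigenvalue below $\lambda$.

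First I would push the symmetry down to the nodal partition. Since $u\circ\sigma=\pm u$, the nodal set is $\sigma$-invariant, so $\sigma$ permutes the $N:=\mu(u)$ nodal domains and the group $\{1,\sigma\}$ acts on them with orbits of size one or two. To each orbit $O$ I attach the test function $w_O:=u\,\mathbf 1_{\Omega_O}$, where $\Omega_O$ is the union of the domains in $O$. Using $\mathbf 1_D\circ\sigma=\mathbf 1_{\sigma D}$ together with $u\circ\sigma=\pm u$, one checks that $w_O$ again belongs to $H_\pm$; in the antisymmetric case there are moreover no orbits of size one, because $u$ cannot keep a constant sign on a $\sigma$-invariant domain. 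Each $w_O$ vanishes on $\partial\Omega_O$, hence lies in $H^1(\mathbb S^{d-1})$ and satisfies $\int|\nabla w_O|^2=\lambda\int w_O^2$ by Green's formula; as the $w_O$ have pairwise disjoint supports, every linear combination of them has Rayleigh quotient $R(w):=\int_{\mathbb S^{d-1}}|\nabla w|^2 / \int_{\mathbb S^{d-1}}w^2$ exactly equal to $\lambda$. Finally, since each orbit contains at most two domains, the number of orbits is at least $N/2$.

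The decisive step is a dimension count closed off by unique continuation. Assume, for contradiction, that $N\ge 2k+1$; then there are at least $k+1$ orbits, giving test functions $w_{O_1},\dots,w_{O_{k+1}}$ spanning a $(k+1)$-dimensional subspace of $H_\pm$. The $k-1$ orthogonality conditions $\langle\,\cdot\,,\psi_j\rangle=0$ and the single condition that the coefficient of $w_{O_{k+1}}$ vanish form $k$ linear constraints, so there is a nonzero $w=\sum_{i=1}^{k}c_i w_{O_i}$ with $w\perp\psi_1,\dots,\psi_{k-1}$ inside $H_\pm$. By the min-max principle restricted to $H_\pm$, orthogonality to $E_{<\lambda}\cap H_\pm$ gives $R(w)\ge\mu_k=\lambda$; combined with $R(w)=\lambda$ this shows that $w$ realizes the minimum and is therefore a genuine eigenfunction for $\lambda$. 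But $w\equiv 0$ on the orbit $O_{k+1}$, i.e.\ on a nonempty open set, whereas eigenfunctions of $-\Delta_{\mathbb S^{d-1}}$ are real-analytic (they are restrictions of harmonic polynomials), forcing $w\equiv 0$ --- a contradiction. Hence $N\le 2k$, as claimed.

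I expect the only genuinely delicate point to be the symmetry bookkeeping: verifying that the orbit sums $w_O$ land in the correct subspace $H_\pm$, and that in the antisymmetric case no self-symmetric domain can occur, so that the clean estimate \enquote{number of orbits $\ge N/2$} --- which is exactly what produces the factor $2$ --- is valid. The remaining ingredients, namely the Rayleigh-quotient identity for the restriction of an eigenfunction to a union of its nodal domains and the replacement of general unique continuation by analyticity on the sphere, are identical to the classical situation.
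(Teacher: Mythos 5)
Your proposal is correct and follows essentially the same route as the paper: Courant's argument run inside the symmetry subspace, with sums of the eigenfunction over nodal-domain orbits as disjointly supported test functions of Rayleigh quotient $\lambda$, the min-max principle in $H_\pm$ forcing such a combination to be a genuine eigenfunction, and analyticity (unique continuation) on a leftover open set yielding the contradiction. The only difference is cosmetic bookkeeping: you treat the symmetric and antisymmetric cases uniformly via orbits of size one or two, while the paper splits according to the parity of $\ell$ and counts invariant domains ($\mu'$) and exchanged pairs ($\mu''$) separately.
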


\begin{proof}
We note
that each eigenspace has a specific symmetry with respect to the antipodal map. 
An eigenfunction $\psi_\ell$ associated with the eigenvalue $\ell (\ell + d-2)$ 
satisfies indeed
\[
\psi_\ell (-\omega) = (-1)^\ell \psi_\ell  (\omega)\,,
\quad\forall \omega \in \mathbb S^{d-1}\,.
\]
This is an immediate consequence of the fact that $\psi_\ell$ is the restriction 
to $\mathbb S^{d-1}$ of an homogeneous polynomial of degree $\ell$ of 
$d$ variables.

With this in mind, we first assume that $\ell$ is odd, and hence let 
$\psi_\ell$ be an eigenfunction with minimal labeling $\nu$ inside the 
antisymmetric space. Let us assume, to get a contradiction, that 
\[
\mu(\psi_\ell)  \geq 2 \nu +1\,.
\]
We note that by antisymmetry, $\mu(\psi_\ell)$ is even. Hence we would have 
actually
\[
\mu(\psi_\ell)  \geq 2 \nu +2\,.
\]
We now follow the standard proof of Courant's theorem.
Selecting $(\mu(\psi_\ell)/2\, -1)$ pairs of symmetric nodal domains, we can 
construct an antisymmetric function, which is orthogonal to the antisymmetric 
eigenspace corresponding to the $\nu -1$ first eigenvalues and has an energy not 
greater than the $\nu$-th eigenvalue. Using the mini-max characterization of 
the $\nu$:th eigenvalue, we get that this function is an antisymmetric 
eigenfunction which vanishes in the two remaining nodal domains. This gives the 
contradiction using the unique continuation principle.

Next, assume that $\ell$ is even and that $\psi_\ell$ is an eigenfunction 
with minimal labeling~$\nu$ inside the symmetric space. We assume, again to
get a contradiction, that 
\[
\mu(\psi_\ell)  \geq 2 \nu +1\,.
\]
We have 
\[
\mu(\psi_\ell)  = \mu' + 2 \mu''
\]
where $\mu'$ is the number of nodal domains which are symmetric and $\mu''$ 
is the number of pairs of nodal domains which are exchanged by symmetry.

If $\mu' =0$, the proof is identical to the antisymmetric one. If 
$\mu' \geq 1$,  we can select $\mu'-1$ symmetric nodal domains and $\mu''$ 
pairs of nodal domains exchanged by symmetry and construct  a symmetric 
function which is orthogonal to the symmetric eigenspace corresponding to 
the $\nu -1$ first eigenvalues and has an energy not greater than the 
$\nu$:th eigenvalue. Here we have used our assumption by contradiction to 
get that  $\mu'-1 + 2\mu'' \geq \nu$. We get a contradiction just as 
before.
\end{proof}

\begin{proof}[Proof of Theorem~\ref{thm:CourantSphere}]
This is just an adaptation of Leydold's proof (\cite{Ley0,Ley1}). 

We consider the (smallest) labeling of the eigenvalue $\ell (\ell +d-2)$, i.e. the smallest $n$ such that $\lambda_n=\ell(\ell+d-2)$.
According to Proposition~\ref{proshu}, the smallest labeling of 
the eigenvalue $\ell(\ell +d-2)$ is obtained by~$1$ if $\ell=0$,~$2$ if 
$\ell=1$, and
\[
1 + \binom{\ell+d -2}{d-1} + \binom{\ell +d -3}{d-1}\,,\quad\forall\ell\geq 2\,.
\]
Using Theorem~\ref{thm:Courantsymmetry} on the eigenvalue $\ell (\ell+d-2)$, 
we get
\[
\mu(u_\ell) \leq 2 \biggl[ \binom{\ell + d-3}{d-1} +1 \biggr]\,.
\]
To compute this labeling we have used that for a given $\ell$, the labeling 
is obtained by adding $1$ to the sum of the multiplicity associated with 
the $\ell' < \ell$ with the same parity as $\ell$.

Hence, we have to check that if $\ell \geq 2$ and $d\geq 3\,$, then
\begin{equation}
\label{eq:binomials}
2 \biggl[\binom{\ell +d -3}{d-1} +1\biggr] 
< 1 + \binom{\ell +d-2 }{d-1} + \binom{\ell+d -3}{d-1}\,.
\end{equation}
Since $\binom{\ell+d-2}{d-1}=\binom{\ell+d-3}{d-2}+\binom{\ell+d-3}{d-1}\,$,
the inequality~\eqref{eq:binomials} reads $1 < \binom{\ell + d-3}{d-2}\,$, which 
is satisfied when $\ell \geq 2$ and $d \geq 3\,$.
\end{proof}

\subsection{Twisting trick}
\label{sec:Twist}

\begin{lemma}
\label{lem:twist}
If $\ell\geq 1$ and $m\geq 2$ then neither $\lambda_{\ell,m}^D$ nor
$\lambda_{\ell,m}^N$ can be Courant sharp.
\end{lemma}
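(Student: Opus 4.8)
The plan is to exploit the title's ``twisting trick'' as advertised in the introduction. The key structural fact is that an eigenfunction for $\lambda_{\ell,m}^D$ or $\lambda_{\ell,m}^N$ with $\ell\geq 1$ separates, in spherical coordinates, as a product $\Xi_\ell^{(d)}(\sqrt{\lambda}\,r)\,Y(\omega)$, where $Y$ is a spherical harmonic of degree $\ell$ on $\mathbb S^{d-1}$. When $m\geq 2$, the radial factor $\Xi_\ell^{(d)}(\sqrt{\lambda}\,r)$ vanishes at some $\rho<1$ (its $(m-1)$th interior zero), so the nodal set contains a full sphere $\{|x|=\rho\}$. First I would make this separation precise and record the nodal structure: the eigenfunction vanishes on each sphere $\{|x|=r_j\}$ where $r_j$ ranges over the interior zeros of the radial factor, and in between consecutive spheres the nodal set is a cone $\{r\,\omega : r_{j-1}<r<r_j,\ Y(\omega)=0\}$.

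Next I would set up the twisting. Because $\ell\geq 1$, the spherical harmonic $Y$ is non-constant, so it genuinely depends on $\omega$; in particular the eigenfunction is non-radial. I would fix an innermost sphere $\{|x|=\rho\}$, $\rho<1$, on which the function vanishes, and replace the eigenfunction $\Psi$ by a twisted version $\widetilde\Psi$: on the inner ball $\{|x|<\rho\}$ I apply a small rotation $R\in SO(d)$ to the angular variable, i.e.\ set $\widetilde\Psi(x)=\Psi(R^{-1}x)$ for $|x|<\rho$ and $\widetilde\Psi(x)=\Psi(x)$ for $|x|\geq\rho$. Since $\Psi$ vanishes on $\{|x|=\rho\}$ from both sides, $\widetilde\Psi$ is continuous across the interface, and on each of the two pieces it solves the same eigenvalue equation with the same eigenvalue (rotations commute with $-\Delta$ and preserve the Dirichlet/Neumann conditions on $\{|x|=1\}$, which is untouched). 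Thus $\widetilde\Psi$ lies in the same eigenspace in the appropriate weak sense and still has eigenvalue $\lambda_{\ell,m}$.

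The payoff is a counting argument. I would show that for a generic small rotation $R$ the twisting strictly increases the number of nodal domains, or equivalently that the original eigenfunction already fails to be Courant sharp because its nodal count is strictly below the Courant bound $n$. The mechanism is that the symmetry of the nodal pattern across the sphere $\{|x|=\rho\}$ forces alignment of the cones on the two sides; breaking this alignment by a rotation that is not a symmetry of the nodal set of $Y$ merges or splits domains in a way that changes $\mu$. Concretely, I would argue that if $\lambda_{\ell,m}$ were Courant sharp with label $n$, then \emph{every} eigenfunction in the eigenspace must achieve exactly $n$ nodal domains (by Courant's theorem the count is always $\le n$, and a Courant-sharp eigenvalue forces the maximum to be attained); producing one member $\widetilde\Psi$ of the eigenspace whose nodal count differs from that of $\Psi$ would already be delicate, so instead I would compare both to the bound and show $\widetilde\Psi$ has strictly more domains than $\Psi$ can have consistently with sharpness.

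The hard part will be the combinatorial analysis of how the nodal domains recombine under twisting, and turning the heuristic ``breaking symmetry increases $\mu$'' into a rigorous inequality valid in every dimension. I expect to need to choose the rotation $R$ carefully — small enough to remain in a regime where domains only deform rather than undergo uncontrolled topology changes, yet nontrivial relative to the symmetry group of the nodal set of the spherical harmonic $Y$ — and to verify that the twisted function is a genuine eigenfunction (not merely piecewise one) so that Courant's theorem applies to it. The boundary condition at $\{|x|=1\}$ and the matching at $\{|x|=\rho\}$ must both be checked to confirm $\widetilde\Psi$ is admissible for the same operator; once that is in place, the strict change in $\mu$ contradicts Courant sharpness and yields the lemma.
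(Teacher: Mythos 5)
Your setup (the separation of variables, the interior sphere $\{|x|=\rho\}$ contained in the nodal set, the rotation of the inner ball) matches the paper's, but the mechanism you propose for the payoff fails, and the obstruction is not something you can ``verify'' away. The twisted function $\widetilde\Psi$ is \emph{never} a genuine eigenfunction when the rotation $R$ breaks the symmetry: writing $\Psi=u(r)\psi_\ell(\omega)$, the function $\widetilde\Psi$ is indeed continuous across $\{|x|=\rho\}$ (both sides vanish there), but its radial derivative jumps there by $u'(\rho)\bigl(\psi_\ell(\omega)-\psi_\ell(R^{-1}\omega)\bigr)$, and $u'(\rho)\neq 0$ by Cauchy uniqueness. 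So, distributionally, $-\Delta\widetilde\Psi-\lambda\widetilde\Psi$ is a nonzero surface measure on the interface precisely when $\psi_\ell\circ R^{-1}\neq\psi_\ell$, i.e.\ exactly in the case you need; Courant's theorem therefore can never be applied to $\widetilde\Psi$. Worse, the proposed combinatorial mechanism is empty: the nodal set of $\widetilde\Psi$ still contains the full sphere $\{|x|=\rho\}$, inside of which the nodal pattern is a rotated copy of the original and outside of which it is unchanged, so $\mu(\widetilde\Psi)=\mu(\Psi)$ --- twisting does not change the number of nodal domains at all. Finally, your claim that Courant sharpness forces \emph{every} eigenfunction in the eigenspace to have exactly $n$ nodal domains is false; sharpness only asserts the existence of one such eigenfunction, so exhibiting functions with different nodal counts contradicts nothing.

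What the paper actually does with the same twisted object is different, and this is the idea you are missing. The function $\widetilde\Psi$ is used only to produce a partition of the ball into $n$ open sets, each of which (being a rotated or unrotated nodal domain of $\Psi$) has ground state energy exactly $\lambda_n$. The twist buys \emph{strictness} in the following spectral inequality: for the adjacent pair $\widetilde D_1=(0,\rho)\times R\Omega$ and $\widetilde D_2=(\rho,\rho_2)\times\Omega$ one has $\lambda_2\bigl(\Int(\overline{\widetilde D_1}\cup\overline{\widetilde D_2})\bigr)\leq\lambda_n$, and equality would force this pair to be the nodal partition of a second eigenfunction of the union, which would imply $\mu\,\psi_\ell=\psi_\ell\circ R$ on $\Omega\cap R\Omega$ and hence, by analyticity, on all of $\mathbb S^{d-1}$; for a rotation chosen to break the symmetry this is contradicted at the boundary of $\Omega$ or of $R^{-1}\Omega$. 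Hence $\lambda_2<\lambda_n$ strictly, so the two nodal domains of a second eigenfunction of the union have ground state energy $<\lambda_n$, and Hadamard deformations of common boundaries propagate this strict decrease to the remaining $n-2$ domains. The final contradiction is with the minimax characterization of $\lambda_n$ (one cannot have $n$ pairwise disjoint open subsets of the ball all with $\lambda_1<\lambda_n$), not with Courant's theorem applied to a twisted ``eigenfunction''. As written, the central step of your argument cannot be repaired without switching to this partition--energy formulation.
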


Because the theory of 
minimal partitions has not been developed to the same extend when $d>3$, we 
explain how the proof goes, without referring to~\cite{HHOT, HHOT1} which are 
mainly devoted to the case when the dimension is 2 or 3. 
The proof below is somewhat reminiscent of a proof written in 
collaboration with T.~Hoffmann-Ostenhof (2005), which was never published but 
is mentioned in~\cite{HHOT}.

\begin{proof}
We start with the Dirichlet situation, and omit the $D$ in the notation. All
eigenvalues occurring are Dirichlet eigenvalues of the Laplace operator. The
domain will differ, and we will be explicit about that.

Assume that we have a Courant sharp eigenvalue $\lambda_n=\lambda_{\ell,m}\,$,
with $\ell\geq 1$ and $m\geq 2$. We will construct a partition 
$\widehat{\mathcal D}$ of $n$ non-intersecting open sets 
$\{\widehat{D}_j\}_{j=1}^n$ in the ball, such that
\[
\max_j\lambda_1(\widehat D_j)<\lambda_n\,.
\]
This leads to a contradiction by the minimax characterization of the $n$th
eigenvalue.

Since we assume that $\lambda_n$ is Courant sharp, there exists an eigenfunction
$\Psi$ having exactly $n$ nodal components. Moreover, this $\Psi$ cannot be
radial (since $\ell\geq 1$). So we have $\Psi (r,\omega)= u_{\ell,m} (r)
\psi_\ell (\omega)$ where $\psi_\ell$ is a spherical harmonic. 
We let $\rho_1$ be the first zero of $u_{\ell,m}$ in $(0,1)$ (which exists 
since $m\geq 2$) and $\rho_2$ be the second zero,
if it exists, and $\rho_2=1$ otherwise. The ball is naturally divided
into the parts $|x|<\rho_1$ and $\rho_1<|x|<1$. Next, we define the
function $\widetilde\Psi$ as
\[
\widetilde\Psi(x)=
\begin{cases}
\Psi(\mathsf{R}x), & |x|<\rho_1\,,\\
\Psi(x), & \rho_1<|x|<1\,.
\end{cases}
\]
Here $\mathsf{R}$ is a small rotation, constructed in such a way that
the symmetry is broken.

Let us denote by $\widetilde{\mathcal D}=\bigcup_{j=1}^n\widetilde{D}_j$ 
the  twisted partition of nodal domains corresponding to $\widetilde\Psi$.

We now consider a pair of nodal domains of $\Psi$ in the form (after relabeling) 
  $D_{1}:= (0,\rho_1)\times \Omega$ and $D_{2}:=(\rho_1,\rho_2)\times\Omega$.
The twisting leads to the pair (see Figure~\ref{fig:twist}, middle subfigure) 
 $\widetilde D_{1}:=  (0,\rho_1)\times\mathsf R\Omega$ and   
$ \widetilde D_{2} = D_{2} :=   (\rho_1,\rho_2)\times\Omega$. Their boundary 
is $ \{\rho_1\}\times(\Omega \cap \mathsf R \Omega)$. 
The sets $\widetilde{D}_1$ and
$\widetilde{D}_2$ cannot be the $2$-partition of a second eigenfunction
in $\widetilde{D}_{1,2}:=\Int \Bigl( \overline{\widetilde D_1} 
\cup \overline{\widetilde D_{2}}\Bigr)$.  If it was true, it would exist 
$\mu \in \mathbb R $ 
such that $\mu \psi_\ell = \psi_\ell\circ \mathsf R$ in 
$\Omega\cap\mathsf R \Omega$. But 
this will imply  $\mu \psi_\ell = \psi_\ell\circ \mathsf R$ on 
$\mathbb S^{d-1}$ by analyticity.  We get a contradiction at the boundary 
of $\Omega$ or of $\mathsf R^{-1} \Omega$.

Thus, $\lambda_2(\widetilde{D}_{1,2})<\lambda_n$. By looking at the nodal set
of a second eigenfunction $u_{1,2}$ in $\widetilde{D}_{1,2}$, we get two
new sets $D_1'$ and $D_2'$ (the two nodal domains of $u_{1,2}$) such that
\[
\lambda_1(D_1')=\lambda_1(D_2')<\lambda_n\,.
\]
We recall that the remaining $n-2$ components of the partition 
$\widetilde{\mathcal D}$ have ground state energy $\lambda_n\,$. This is
illustrated in Figure~\ref{fig:twist}, to the right.
If $n=2$, then we are done. Below we assume that $n>2$.

We continue, by considering $D_1'$ or $D_2'$, and one of its neighbors, having
a boundary in common. Let us, for a while, denote this pair by  $D_1''$ and 
$D_2''$. It is possible, using the Hadamard formula (see~\cite{Henrot})  to
change the common boundary of $D_1''$ and $D_2''$ in such a way that two new
domains $\widetilde{D}_1''$ and $\widetilde{D}_2''$ are constructed, with
\[
\lambda_1(D_1'')
<\lambda_1(\widetilde{D}_1'')
\leq \lambda_1(\widetilde{D}_2'')
<\lambda_1(D_2'')\,.
\]
In particular,
\[
\max\Bigl(\lambda_1(\widetilde{D}_1''),\lambda_1(\widetilde{D}_2'')\Bigr)
<\max\Bigl(\lambda_1(D_1''),\lambda_1(D_2'')\Bigr)
=\lambda_n\,.
\]
At this point we have constructed three domains inside the ball, with 
ground state energy strictly less than $\lambda_n$. 
If $n=3$, we are done. If $n\geq 4$,
we continue this procedure recursively until all the
remaining domains in the partition $\widetilde{\mathcal{D}}$ have been
modified, and find in the end a new partition 
$\widehat{\mathcal D}$ of the ball, consisting of $n$ pairwise disjoint sets
$\widehat{D}_j$, such that $\lambda_1(\widehat{D}_j)<\lambda_n$ for all $j$.

The proof in the Neumann case is unchanged. One can do the necessary 
deformations in the boundaries where the Dirichlet condition is imposed.
\end{proof}

\begin{figure}
\centering
\makebox[\textwidth][c]{%
\includegraphics{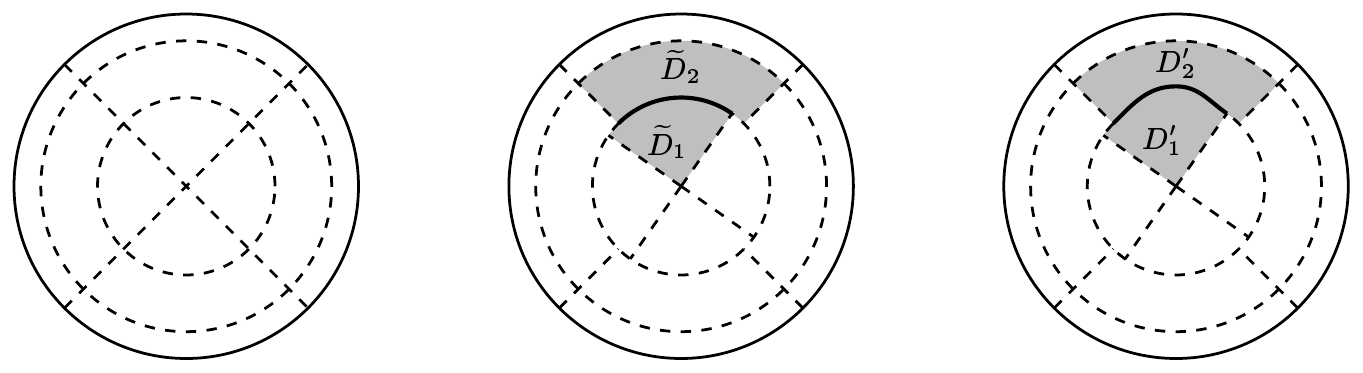}
}
\caption{Twist showcase. The inner dashed circle correspond to $|x|=\rho_1\,$, 
and the outer one to $|x|=\rho_2\,$.
Left: The partition of nodal domains of $\Psi$. 
Middle: The  nodal partition of $\widetilde{\Psi}\,$. The 
common boundary of $\widetilde{D}_1$ and $\widetilde{D_2}$ is marked thicker.
Right: The two nodal domains $D_1'$ and $D_2'$ of the second eigenfunction
in $\widetilde{D}_{1,2}\,$.
}
\label{fig:twist}
\end{figure}

\begin{remark}
The proof of Lemma~\ref{lem:twist} is easier in dimension two, since we
can refer to the \enquote{equal angle meeting} property, which is satisfied by 
any regular minimal partition  as established 
in~\cite{HHOT}.
\end{remark}

\subsection{Remaining eigenvalues, Dirichlet case}
\label{sec:Dirichlet}

\begin{lemma} Let $d\geq 3\,$. Then
\[
\lambda_{1,1}^D < \lambda_{0,2}^D\,.
\]
\end{lemma}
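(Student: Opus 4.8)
The plan is to translate the inequality into a statement about zeros of a single family of Bessel functions and then invoke the classical interlacing of zeros of consecutive orders. First I would observe that the factor $r^{\frac{2-d}{2}}$ never vanishes on $(0,\infty)$, so the positive zeros of $\Xi_\ell^{(d)}$ coincide with those of $J_{\ell+d/2-1}$. Writing $j_{\nu,k}$ for the $k$th positive zero of $J_\nu$, this identifies
\[
\lambda_{1,1}^D=\bigl(j_{d/2,1}\bigr)^2,\qquad \lambda_{0,2}^D=\bigl(j_{d/2-1,2}\bigr)^2,
\]
since for $\ell=0$ the relevant order is $\tfrac12(2\cdot 0+d-2)=d/2-1$ and for $\ell=1$ it is $\tfrac12(2+d-2)=d/2$. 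Hence the claim reduces to the single inequality $j_{d/2,1}<j_{d/2-1,2}$.

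The key tool is the interlacing of the positive zeros of $J_\nu$ and $J_{\nu+1}$, namely
\[
j_{\nu,1}<j_{\nu+1,1}<j_{\nu,2}<j_{\nu+1,2}<\cdots,
\]
valid for $\nu>0$ (see~\cite{Wa}). Taking $\nu=d/2-1$, which satisfies $\nu\geq 1/2>0$ because $d\geq 3$, and reading off the middle inequality $j_{\nu+1,1}<j_{\nu,2}$ gives exactly $j_{d/2,1}<j_{d/2-1,2}$, and squaring yields the assertion. If one prefers a self-contained justification of the interlacing, I would derive it from the recurrence~\eqref{eq:Jp1}: at any positive zero $j_{\nu,k}$ of $J_\nu$ one has $J_\nu'(j_{\nu,k})=-J_{\nu+1}(j_{\nu,k})$, so since the simple zeros of $J_\nu$ are precisely the points where $J_\nu'$ changes sign, $J_{\nu+1}$ takes opposite signs at consecutive zeros of $J_\nu$ and therefore vanishes strictly between them; combined with the elementary monotonicity $j_{\nu,1}<j_{\nu+1,1}$ this fixes the order as stated.

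The main obstacle is essentially only bookkeeping rather than analysis: one must be careful that the Bessel orders attached to $\ell=0$ and $\ell=1$ differ by exactly one, so that $\lambda_{0,2}^D$ and $\lambda_{1,1}^D$ correspond to the second and first zeros of consecutive orders $J_{d/2-1}$ and $J_{d/2}$, which is precisely the configuration covered by the interlacing theorem. No further estimate is required, and the hypothesis $d\geq 3$ enters only to guarantee $\nu=d/2-1>0$ (the case $d=2$ being already treated in~\cite{HHOT}).
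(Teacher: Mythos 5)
Your proof is correct and follows essentially the same route as the paper: identify $\lambda_{1,1}^D=(j_{d/2,1})^2$ and $\lambda_{0,2}^D=(j_{d/2-1,2})^2$, then apply the classical interlacing of zeros of Bessel functions of consecutive orders (\cite[\S 15.22]{Wa}) with $\nu=d/2-1$. Your bookkeeping of the orders is in fact slightly more careful than the paper's, which cites the interlacing \enquote{with $\nu=d/2$} where $\nu=d/2-1$ is what is actually needed.
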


\begin{proof}
Denote by $j_{\nu,m}$ the $m$th positive zero of the Bessel function $J_\nu\,$.
The interlacing of zeros of $J_\nu$ (see~\cite[\S15.22]{Wa}),
\[
0<j_{\nu,1}<j_{\nu+1,1}<j_{\nu,2}<j_{\nu+1,2}<j_{\nu,3}
<\cdots\quad\forall \nu>-1\,,
\]
implies, with $\nu=d/2\,$, that $\alpha_{1,1}<\alpha_{0,2}\,$, and so 
$\lambda_{1,1}^D < \lambda_{0,2}^D\,$.
\end{proof}

Hence only $\lambda_{0,1}^D$ can be (and is!) Courant sharp in the list 
$\lambda_{0,m}^D$. For the sequence $\lambda_{\ell,1}^D\,$, one can use what we 
have proven for the sphere. Only $\lambda_{0,1}^D$ and $\lambda_{1,1}^D$ can 
be Courant sharp. This completes the proof of Theorem~\ref{thm:dD} 
for the Dirichlet problem.

\subsection{Remaining eigenvalues, Neumann case}
\label{sec:Neumann}
\begin{lemma} Let $d\geq 2\,$. Then
\[
\lambda_{1,1}^N < \lambda_{0,2}^N\,.
\]
\end{lemma}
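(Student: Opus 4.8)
The plan is to reduce both eigenvalues to zeros of a single Bessel-type function and then compare them by an elementary monotonicity/Rolle argument. The key observation is that the recursion \eqref{eq:Xip1} with $\ell=0$ gives
\[
\frac{d}{dr}\Xi_0^{(d)}(r)=-\Xi_1^{(d)}(r)\,,
\]
so the positive zeros of $\frac{d}{dr}\Xi_0^{(d)}$ coincide exactly with the positive zeros of $\Xi_1^{(d)}(r)=r^{(2-d)/2}J_{d/2}(r)$, i.e.\ with the zeros $j_{d/2,m}$ of $J_{d/2}$. Since the origin is the non-negative zero accounting for the Neumann ground state $\lambda_{0,1}^N=0$, the bookkeeping gives $\beta_{0,2}^{(d)}=j_{d/2,1}$, the first positive zero of $\Xi_1^{(d)}$.

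Next I would study $\Xi_1^{(d)}$ directly on the interval $(0,j_{d/2,1})$. From the small-argument behaviour $J_{d/2}(r)\sim \frac{1}{\Gamma(d/2+1)}(r/2)^{d/2}$ one gets
\[
\Xi_1^{(d)}(r)\sim \frac{1}{\Gamma(d/2+1)\,2^{d/2}}\,r\qquad(r\to 0^+)\,,
\]
so that $\Xi_1^{(d)}(0)=0$ while $\frac{d}{dr}\Xi_1^{(d)}(0)>0$; in particular the origin is \emph{not} a zero of $\frac{d}{dr}\Xi_1^{(d)}$, which is why for $\ell=1$ one counts genuinely positive zeros. Thus $\Xi_1^{(d)}$ is positive and increasing just to the right of $0$ and vanishes again at $r=j_{d/2,1}$. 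By Rolle's theorem it has a critical point in the open interval $(0,j_{d/2,1})$, and therefore its smallest positive critical point, namely $\beta_{1,1}^{(d)}$, satisfies $\beta_{1,1}^{(d)}<j_{d/2,1}$.

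Combining the two steps yields
\[
\beta_{1,1}^{(d)}<j_{d/2,1}=\beta_{0,2}^{(d)}\,,
\]
and squaring gives $\lambda_{1,1}^N<\lambda_{0,2}^N$, as desired. I do not expect a serious obstacle here: the argument is essentially a first-maximum-before-first-zero statement for $\Xi_1^{(d)}$. The only point requiring care is the bookkeeping of zeros at the origin—confirming that $\beta_{0,2}^{(d)}$ is the \emph{first positive} zero of $\Xi_0'$ (the origin being absorbed by the ground state) and that $\frac{d}{dr}\Xi_1^{(d)}$ does not vanish at $r=0$, so that the critical point produced by Rolle's theorem is genuinely interior and hence equals $\beta_{1,1}^{(d)}$.
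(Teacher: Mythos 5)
Your proof is correct and takes essentially the same route as the paper: both rest on the identity $\Xi_0'(r)=-\Xi_1(r)$ from \eqref{eq:Xip1}, the linear small-$r$ behaviour $\Xi_1^{(d)}(r)\sim c\,r$ with $c>0$, and Rolle's theorem (the paper says mean value theorem) to place a critical point of $\Xi_1$ strictly before its first positive zero. Your additional bookkeeping---identifying $\beta_{0,2}^{(d)}=j_{d/2,1}$ and noting that $\frac{d}{dr}\Xi_1^{(d)}$ does not vanish at the origin---merely makes explicit what the paper leaves implicit.
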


\begin{proof}
We show that $\beta_{1,1}<\beta_{0,2}\,$. We recall that $\beta_{1,1}$ is the 
first \emph{positive} zero of $\Xi_1'$ and $\beta_{0,2}$ is the first 
\emph{positive} zero of $\Xi_0'$. But, according to~\eqref{eq:Xip1}, 
$\Xi_0'(r)=-\Xi_1(r)$. Now, $\Xi_1(r)= 2^{-d/2}/\Gamma(d/2+1)r+\mathcal O(r^3)$ 
as $r\to 0$, so, in particular $\Xi_1(0)=0$ for all $d\geq 2$. It follows
that $\beta_{1,1}<\beta_{0,2}$ by the mean value theorem.
\end{proof}

As a result, $\lambda_{0,m}^N$ cannot be Courant sharp if $m\geq 2\,$.
Indeed, since the eigenfunctions corresponding to $\lambda_{0,m}^N$ 
have precisely $m$ nodal domains, and the labeling of $\lambda_{0,m}^N$ is 
at least $m+1$ because $\lambda_{0,1}^N<\lambda_{1,1}^N<\lambda_{0,2}^N\,$.

We continue with the eigenvalues $\lambda_{\ell,1}^N\,$, and start
with the case $d=2$. We first see the following ordering 
for the eight first eigenvalues (see Figure~\ref{fig:nd}):
\[
\lambda_1^N=\lambda_{0,1}^N < \lambda_2^N = \lambda_{1,1}^N=\lambda_3^N < 
\lambda_4^N=\lambda_{2,1}^N =\lambda_5^N <\lambda_6^N=\lambda_{0,2}^N  < 
\lambda_7^N= \lambda_{3,1}^N =\lambda_8^N\,,\,\ldots
\]
with corresponding number of nodal domains
\[
\mu_1=1\,,\, \mu_2 =2\,,\,\mu_4 =4\,,\, \mu_6 =2\,,\, \mu_7 =6\,,\ldots
\]
We observe that $\lambda_{0,2}^N<\lambda_{3,1}^N\,$. Hence $\lambda_{\ell,1}^N$
cannot have a label lower than $2\ell+1$ in the complete ordered list of
eigenvalues and the corresponding eigenfunction has exactly $2\ell$ nodal
domains.

For $d\geq 3$ we can again use Theorem~\ref{thm:CourantSphere} to conclude 
that only the two first eigenvalues $\lambda_{0,1}^N$ and $\lambda_{1,1}^N$
can be Courant sharp.

This finishes the proof of Theorem~\ref{thm:dD} in the Neumann case.\qedhere

\begin{figure}
\centering
\makebox[\textwidth][c]{%
\includegraphics{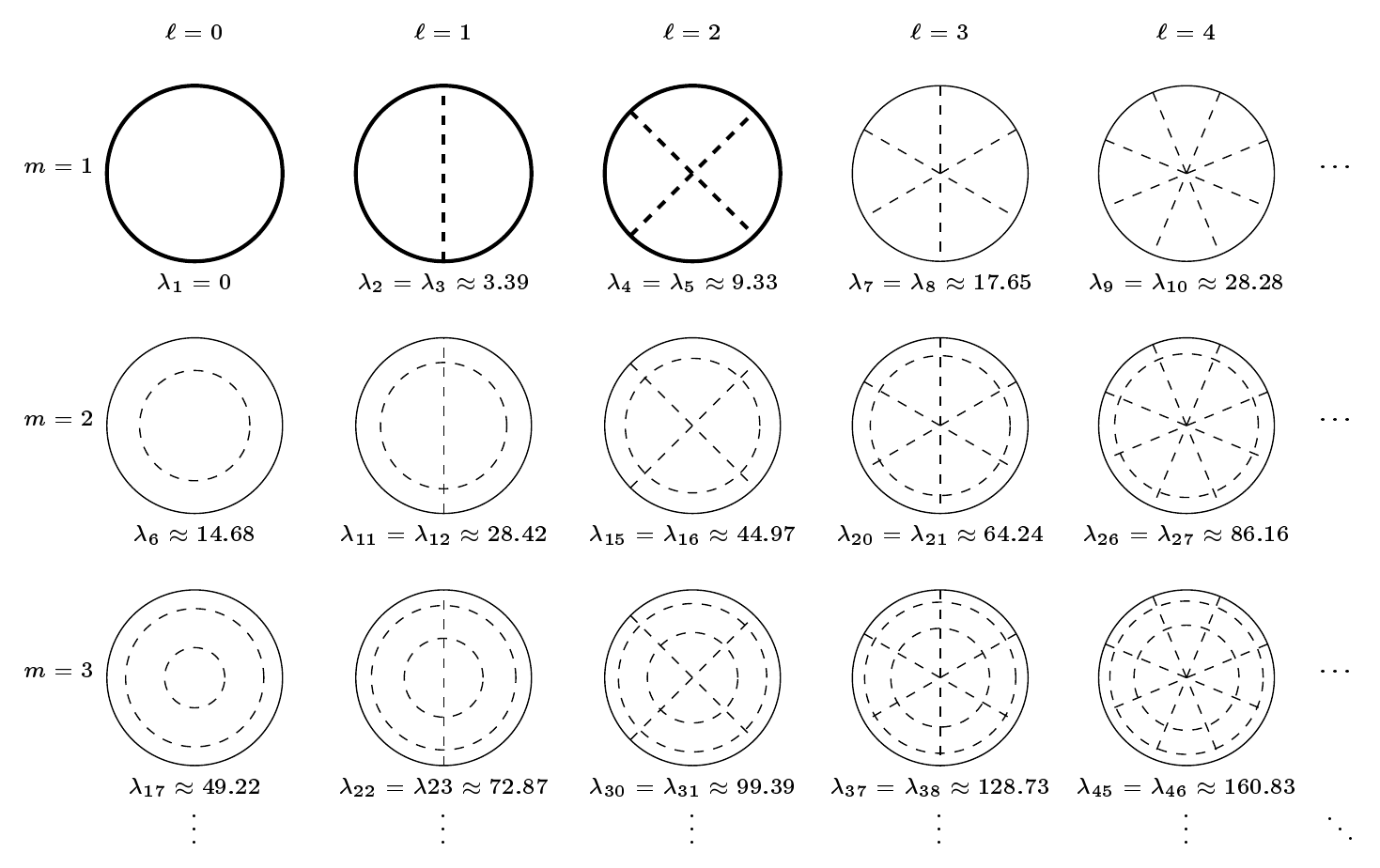}
}
\caption{Nodal domains for the eigenfunctions in the Neumann case for $d=2$. 
Thicker: The Courant sharp cases. The twisting argument excludes every case 
except $m=1$ (row one) and $\ell=0$ (column one) from being Courant sharp.}
\label{fig:nd}
\end{figure}

\section{On Pleijel's Theorem}
\label{sec:Pleijel}
We will discuss (the dimension-dependent) Pleijel constant
\[
\gamma (d):= C_{d}^{-1}  \, \bigl(\lambda_1(\mathbb{B}_1^d) \bigr)^{- d/2} < 1\,,
\]
where
$C_{d}$ is the Weyl constant $C_{d}:= (2\pi)^{-d}\omega_d$,
$\omega_d$ is the volume of the unit ball in $\mathbb R^d$,
and $\lambda_1(\mathbb{B}_1^d) $ is the Dirichlet ground state energy of
the
Laplacian in the ball  $\mathbb{B}_1^d$ of volume $1$.
More explicitly, we get (see~\cite[Lemma~9]{BeMe})
\[
\gamma (d)=
\frac{2^{d-2} d^2 \Gamma (d/2)^2}{(j_{\frac{d-2}{2},1})^d}\,.
\]
As explained in the introduction, we focus on the Neumann case.

\begin{thm}
For any infinite sequence of eigenpairs $(\lambda_n,u_n)$ of the Neumann 
Laplacian in the unit ball in $\mathbb R^d$ ($d\geq 3$),
\[
\lim\sup \frac{\mu(u_n)}{n} \leq  \gamma (d-1)  < 1\,.
\]
\end{thm}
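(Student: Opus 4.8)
The plan is to exploit the product structure of the Neumann eigenfunctions in spherical coordinates, reduce the nodal count to a purely angular quantity, and then feed in the B\'erard--Meyer (Pleijel) theorem on the sphere $\mathbb S^{d-1}$, which is exactly where the constant $\gamma(d-1)$ enters. By Lemma~\ref{lem:Xizeros} distinct pairs $(\ell,m)$ give distinct Neumann eigenvalues, so every eigenfunction attached to $\lambda^N_{\ell,m}$ has the form $\Psi(r,\omega)=u_{\ell,m}(r)\,\psi_\ell(\omega)$ with $u_{\ell,m}(r)=\Xi^{(d)}_\ell(\beta^{(d)}_{\ell,m}r)$ and $\psi_\ell$ a spherical harmonic of degree $\ell$. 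The radial factor is the $m$th Neumann eigenfunction of $\mathcal L$, hence has exactly $m-1$ zeros in $(0,1)$, cutting $(0,1)$ into $m$ intervals; since the zero set of $\Psi$ is the union of the corresponding spheres with the cones over the nodal set of $\psi_\ell$, each nodal domain of $\Psi$ is a product of a radial interval with a nodal domain of $\psi_\ell$, so that
\[
\mu(\Psi)=m\cdot\mu(\psi_\ell)\le m\cdot M(\ell),
\]
where $M(\ell)$ denotes the maximal number of nodal domains of a degree $\ell$ spherical harmonic. Throughout I write $N_\ell:=\sum_{\ell'=0}^\ell\Lambda_{\ell',d}$ for the spherical counting function.

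Next I would bound the label $n$ from below by a rectangle count. Since the potential $\ell(\ell+d-2)/r^2$ in $\mathcal L$ is nondecreasing in $\ell$, the min--max principle shows $\lambda^N_{\ell,m}$ is nondecreasing in $\ell$, and it is nondecreasing in $m$ by construction; hence all $(\ell',m')$ with $\ell'\le\ell$ and $m'\le m$ satisfy $\lambda^N_{\ell',m'}\le\lambda^N_{\ell,m}$, with strictness off the corner by Lemma~\ref{lem:Xizeros}. Counting these with multiplicity gives
\[
n\ge m\,N_\ell-\Lambda_{\ell,d},
\]
and since $\Lambda_{\ell,d}=o(N_\ell)$ this is $m\,N_\ell\,(1-o(1))$ as $\ell\to\infty$. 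On the angular side, $\mathbb S^{d-1}$ is a compact manifold of dimension $d-1\ge2$ (this is where $d\ge3$ is used), so B\'erard--Meyer~\cite{BeMe} applies: for any sequence of spherical harmonics of degrees tending to infinity, $\mu(\psi_\ell)\le(\gamma(d-1)+o(1))$ times the rank, and taking the nodal-maximizing harmonics while bounding their rank by $N_\ell$ yields $M(\ell)\le(\gamma(d-1)+o(1))\,N_\ell$. Combining the two estimates, along any sequence with $\ell\to\infty$,
\[
\frac{\mu(\Psi)}{n}\le\frac{m\,M(\ell)}{m\,N_\ell\,(1-o(1))}=\frac{M(\ell)}{N_\ell}\,(1+o(1))\le\gamma(d-1)+o(1),
\]
the decisive point being that the radial multiplicity $m$ cancels, so the estimate is uniform in $m$.

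It remains to treat the complementary regime in which $\ell$ stays bounded while $\lambda^N_{\ell,m}\to\infty$, forcing $m\to\infty$. Here $\mu(\Psi)\le m\,M(\ell)=O(m)$, whereas $\lambda^N_{\ell,m}\ge\lambda^N_{0,m}\gtrsim m^2$ combined with Weyl's law $n\sim C_d\,\omega_d\,(\lambda^N_{\ell,m})^{d/2}$ gives $n\gtrsim m^d$, so $\mu(\Psi)/n=O(m^{1-d})\to0$. Splitting an arbitrary sequence of eigenpairs into its bounded-$\ell$ part and its $\ell\to\infty$ part and applying the two estimates yields $\limsup_n\mu(u_n)/n\le\gamma(d-1)$; the strict inequality $\gamma(d-1)<1$ is part of the B\'erard--Meyer theorem.

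The main obstacle I anticipate is the transfer of the sphere Pleijel bound from ``a fixed sequence of eigenfunctions'' to ``the worst degree $\ell$ harmonic for every $\ell$'', and the verification that bounding the rank of $\psi_\ell$ by $N_\ell$ costs nothing in the limit; this requires that the maximizers, together with their actual ranks, form a legitimate sequence to which B\'erard--Meyer applies. Secondary technical points are the two monotonicities of $\lambda^N_{\ell,m}$ needed to make the rectangle count rigorous, and the control $n\ge m\,N_\ell(1-o(1))$ despite the multiplicity correction $\Lambda_{\ell,d}$. The genuinely global difficulty is arranging the dichotomy so that no intermediate regime, for instance $\ell,m\to\infty$ simultaneously, escapes both estimates; it is precisely the cancellation of $m$ in the $\ell\to\infty$ case that makes that estimate uniform in $m$ and thereby closes this gap.
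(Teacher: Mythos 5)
Your proof is correct, and it shares the paper's central mechanism---every eigenfunction of $\lambda^N_{\ell,m}$ is a product $u_{\ell,m}(r)\psi_\ell(\omega)$, so $\mu(\Psi)=m\,\mu(\psi_\ell)$; the label of $\lambda^N_{\ell,m}$ is bounded below by a rectangle count of order $mN_\ell$; and B\'erard--Meyer on $\mathbb S^{d-1}$ then enters with the radial factor $m$ cancelling---but you organize the dichotomy differently, and this buys a genuine simplification. The paper splits according to $m$: when $m\to\infty$ it observes that the nodal domains touching the boundary of the ball number exactly $\mu(\psi_\ell)$, a fraction $1/m$ of the total, and runs the Faber--Krahn (Polterovich-type) argument inside the ball to get the constant $\gamma(d)$; when $m\le m_0$ it runs the sphere reduction to get $\gamma(d-1)$; it therefore ends with $\max\bigl(\gamma(d),\gamma(d-1)\bigr)$ and must invoke the monotonicity of $\gamma$ (Theorem~\ref{thm:gammamono}, proved in Section~\ref{sec:gamma} via convexity of Bessel zeros and the Ashbaugh--Benguria bound) to identify this max as $\gamma(d-1)$. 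You split according to $\ell$ instead, exploiting the fact that the sphere estimate is uniform in $m$---note that the paper's own chain $\mu(\Psi_{\ell,m})/n_{\ell,m}\le\mu(\psi_\ell)/n_\ell$ already has this uniformity, it is simply not used there---and you dispose of the leftover regime ($\ell$ bounded, $m\to\infty$) with the elementary Weyl-law bound $\mu/n=O(m^{1-d})\to 0$. Consequently your proof needs neither the boundary-domain count nor Theorem~\ref{thm:gammamono} (which of course retains independent interest in the paper). The concerns you flag are all benign: the nodal-maximizing harmonics form a legitimate infinite sequence of eigenpairs on $\mathbb S^{d-1}$ to which B\'erard--Meyer applies (indeed its content is the bound $\mu\le(\gamma(d-1)+o(1))N(\lambda)$, uniform over each eigenspace, so taking maximizers costs nothing); the monotonicity of $\lambda^N_{\ell,m}$ in $\ell$ follows from min--max since the effective potential $\ell(\ell+d-2)/r^2$ increases with $\ell$, with equality excluded by Lemma~\ref{lem:Xizeros}; and $\Lambda_{\ell,d}/N_\ell=O(1/\ell)$ makes the multiplicity correction in your rectangle count harmless, uniformly in $m$.
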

 
We recall that $\gamma(2) = 4 / (j_{0,1}^2)$ and that 
$\gamma (3) = \frac{9}{2 \pi^2}$ (see~\cite{HK}). We refer the reader to the 
last section for further properties of $\gamma (d)$. The Neumann case is more 
delicate but can result for 
the disc of the general result of Polterovich for domains in $\mathbb R^2$ with 
piecewise analytic boundary. In~\cite{Pol} he shows that 
Pleijel's theorem holds with the same constant as for Dirichlet, as a 
consequence of a fine result due to Toth--Zelditch~\cite{ToZe} on the 
relatively small number of points at the 
intersection of the boundary and the zeroset. To our knowledge, nothing has been 
established in dimension $d\geq 3$ for the Neumann problem.

A natural idea is   to try to control the number of nodal domains touching the 
boundary on a set with non empty interior. This was the strategy proposed by 
Pleijel~\cite{Pl}  for the square and more generally by 
I.~Polterovich~\cite{Pol} for the $2D$-case.  We know indeed that it is 
$\mu(\psi_\ell)$ when the eigenfunction is $\Psi_{\ell,m}$. 
Hence the quotient between the number of \enquote{boundary} nodal sets 
divided by the total number tends to $0$, as $m \to +\infty$, like 
$\mathcal O(1/m)$. In this case, the \enquote{Faber--Krahn} proof 
works like in the Dirichlet case.

Hence it remains to control the case when $m \leq m_0$. In this case, $\ell$ 
tends to $+\infty$ as $\lambda^N_{\ell,m} \to + \infty$. 

We know that the labeling  of $\lambda_{\ell,m}^N$ is larger than 
$m\times n_\ell$ where $n_\ell$ is the labeling of $\ell (\ell +1)$. 
Hence we get 
\[
\lim\sup \frac{\mu(\Psi_{\ell,m})}{n_{\ell,m}} 
\leq \lim\sup \frac{\mu(\psi_\ell)}{n_\ell}
\leq \gamma (d-1)\,,
\]
where $n_{\ell,m}$ (resp. $n_\ell$) is the labeling of $\lambda_{\ell,m}^N$ 
for the Laplacian in the ball (resp. of $\ell (\ell+1)$ for the Laplacian on 
the sphere). For the last inequality, we have used B\'erard-Meyer (Pleijel like) 
theorem for the sphere $\mathbb S^{d-1}$.
At this stage, we have obtained 
\[
\lim\sup \frac{\mu(u_n)}{n} \leq  \max\bigl( \gamma (d), \gamma (d-1) \bigr) < 1\,.
\]
The conclusion of the theorem is obtained using the monotonicity of $\gamma$ 
which will be established in the next section.


\section{Monotonicity of $\gamma(d)$}
\label{sec:gamma}
 
We recall from \cite[Lemma~9]{BeMe} that $\gamma(d)<1$. 
The proof of this inequality relies on the estimate
\[
\Gamma (x) \leq x^{x-\frac 12} \, e^{-x} \, \sqrt{2\pi} \, e^{\frac {1}{12x}}\,,
\]
which gives the estimate for $d \geq 18$ and of numerical computations 
(see below)  for $d <18$. 
It is natural to discuss about the monotonicity of $d\mapsto \gamma(d)$.

\begin{table}\label{tb1}
\caption{Table of values of $\displaystyle \gamma(d) $}
\begin{tabular}{crrrrr}
\toprule
$d$ & 2 & 3 & 4 & 5 & 6 \\
$\gamma(d)$ & 0.691660 & 0.455945 & 0.296901 & 0.192940 & 0.125581 \\
\midrule
$d$ & 7 & 8 & 9 & 10 & 11 \\
$\gamma(d)$ & 0.081982 & 0.053704 & 0.035306 & 0.023291 & 0.015417 \\
\midrule
$d$ & 12 & 13 & 14 & 15 & 16 \\
$\gamma(d)$ & 0.010236 & 0.006817 & 0.004553 & 0.003048 & 0.002046 \\
\midrule
$d$ & 17 & 18 & 19 & 20 & 21 \\
$\gamma(d)$ & 0.001376 & 0.000928 & 0.000627 & 0.000424 & 0.000288 \\
\bottomrule
\end{tabular}
\end{table}

\begin{thm}
\label{thm:gammamono}
The function $d\mapsto \gamma(d)$ is monotonically decreasing.
\end{thm}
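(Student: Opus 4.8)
The plan is to prove that $\gamma(d+1) < \gamma(d)$ for all $d \geq 2$ by working with the explicit formula
\[
\gamma(d) = \frac{2^{d-2} d^2 \, \Gamma(d/2)^2}{(j_{(d-2)/2,1})^d}\,.
\]
First I would form the ratio $\gamma(d+1)/\gamma(d)$ so that the problem reduces to showing this ratio is strictly less than $1$. Writing $\nu = (d-2)/2$ so that the Bessel indices appearing are $\nu$ and $\nu + 1/2$, the ratio collects three pieces: a power-of-two and polynomial factor $2 \cdot (d+1)^2 / d^2$, a Gamma-function quotient $\Gamma((d+1)/2)^2 / \Gamma(d/2)^2$, and the Bessel-zero quotient $(j_{\nu,1})^d / (j_{\nu+1/2,1})^{d+1}$. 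The elementary factors are harmless; the difficulty is entirely in controlling the first zeros of the Bessel functions across a half-integer shift in the order.

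The key step is therefore a good two-sided estimate on $j_{\nu,1}$, the smallest positive zero of $J_\nu$, as a function of $\nu \geq 0$. I would invoke a known lower bound of the form $j_{\nu,1} > \nu + c_1 \nu^{1/3}$ and an upper bound $j_{\nu,1} < \nu + c_2 \nu^{1/3} + \cdots$ (the classical McMahon/Olver-type expansions, or the sharper bounds collected in Watson~\cite{Wa}), which give that $j_{\nu,1}$ grows essentially linearly in $\nu$ for large $\nu$. Since the monotonicity of zeros in the order already gives $j_{\nu,1} < j_{\nu+1/2,1}$, the crux is to show that the Bessel-zero quotient decays fast enough to overcome the growth of the elementary factor $2(d+1)^2/d^2$ together with the Gamma quotient. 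Using Stirling's formula (in the same spirit as the estimate $\Gamma(x) \leq x^{x-1/2} e^{-x} \sqrt{2\pi} \, e^{1/(12x)}$ quoted in the excerpt) one sees that $\Gamma((d+1)/2)^2/\Gamma(d/2)^2$ grows only like a power of $d$, while the Bessel factor contributes the dominant exponential decay, so the ratio tends to zero.

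The approach splits naturally into a large-$d$ regime handled analytically and a finite initial range handled numerically, exactly as \cite{BeMe} do for the bound $\gamma(d)<1$. The hard part will be pinning down the threshold: making the asymptotic inequality rigorous with fully explicit constants for all $d \geq d_0$, with $d_0$ small enough that the remaining cases $2 \leq d < d_0$ can be settled by the tabulated values of $\gamma(d)$ already displayed in Table~\ref{tb1}. In practice I expect $d_0$ to be moderate, so that the Bessel-zero bounds only need to be invoked once the $\nu^{1/3}$ correction term is comfortably dominated by the linear term. I would close the argument by verifying the strict decrease $\gamma(d+1) < \gamma(d)$ directly on the table for the small cases and by the explicit estimate above for $d \geq d_0$, which together establish monotonicity on all of $\{d \geq 2\}$.
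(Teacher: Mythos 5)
There is a genuine gap, and it lies at the heart of your argument: the claim that the quotient $\gamma(d+1)/\gamma(d)$ \emph{tends to zero} because the Bessel factor provides \enquote{dominant exponential decay} is false. Write $\nu=(d-2)/2$; the Bessel factor splits as
\[
\frac{(j_{\nu,1})^d}{(j_{\nu+1/2,1})^{d+1}}
=\Bigl(\frac{j_{\nu,1}}{j_{\nu+1/2,1}}\Bigr)^{d}\,\frac{1}{j_{\nu+1/2,1}}\,,
\]
and since $j_{\nu,1}/j_{\nu+1/2,1}=1-O(1/d)$, the $d$-th power tends to a \emph{constant} ($e^{-1}$, not $0$), while $1/j_{\nu+1/2,1}\sim 2/d$ decays only polynomially and is exactly cancelled by the growth $\Gamma\bigl(\tfrac{d+1}{2}\bigr)^2/\Gamma\bigl(\tfrac d2\bigr)^2\sim d/2$ of the Gamma quotient, which you dismissed as harmless. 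The net result is $\lim_{d\to\infty}\gamma(d+1)/\gamma(d)=2/e\approx 0.736$ (this is precisely the paper's concluding remark and Figure~\ref{fig:gammaquotient}). So the inequality $\gamma(d+1)<\gamma(d)$ is won by a thin constant margin, not by decay, and every constant in your estimates matters.

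This defeats the specific tool you propose. To bound $\bigl(j_{\nu,1}/j_{\nu+1/2,1}\bigr)^d$ you must combine an \emph{upper} bound for $j_{\nu,1}$ with a \emph{lower} bound for $j_{\nu+1/2,1}$; with generic two-sided bounds $\nu+c_1\nu^{1/3}<j_{\nu,1}<\nu+c_2\nu^{1/3}+\cdots$ ($c_1<c_2$), the mismatch contributes a term of order $(c_2-c_1)\nu^{1/3}/\nu$ inside the ratio, which after raising to the power $d\approx 2\nu$ produces a factor $e^{O(\nu^{1/3})}$ that destroys the bound (the Watson/Chambers bounds~\eqref{eq:Jest} are even cruder, with a gap of order $\sqrt{\nu}$). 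One needs the ratio of the two zeros bounded \emph{above} by $1-\tfrac{c}{d}$ with an explicit $c>\log 2$, uniformly in $d$; matching-constant asymptotics sharp to $O(1)$ absolute error would be required, which is far more delicate than what you cite. The paper circumvents this entirely by a different mechanism: the Lewis--Muldoon convexity of $\nu\mapsto j_{\nu,1}^2$ (Lemma~\ref{lemmatech2},~\cite{LeMu}) converts the half-integer shift into an integer shift, where the Ashbaugh--Benguria inequality~\cite{AB} gives $j_{d/2-1,1}^2/j_{d/2,1}^2\leq 1-\tfrac{3}{d+3}$; together these yield $j_{\nu,1}^2/j_{\nu+1/2,1}^2\leq 1-\tfrac{3}{2(d+2)}$, hence the $d/2$-power is at most $e^{-3/4}$ and $2e^{-3/4}\approx 0.945<1$. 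Even then the asymptotic argument only works for $d\geq 95$, and the cases $2\leq d\leq 94$ are checked numerically -- so your expectation of a \enquote{moderate} threshold $d_0$ coverable by Table~\ref{tb1} (which stops at $d=21$) is also unfounded given how thin the margin is.
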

\begin{proof}
We first recall  that
\begin{lemma}\label{lemmatech1}
$x \mapsto \Gamma(x)$ is logarithmically convex.
\end{lemma}
We write
\[
\Gamma \Bigl(\frac d2 +\frac 12\Bigr)/\Gamma \Bigl(\frac d2\Bigr) 
=\frac 12\,  (d-1) \, \Gamma \Bigl( \frac d2 -\frac 12\Bigr)/\Gamma \Bigl(\frac d2\Bigr)\,.
\]
and use the logarithmic convexity of the $\Gamma$-function,
\[
\Gamma \Bigl( \frac d2 -\frac 12\Bigr)
\leq \sqrt{\Gamma \Bigl(\frac d2 -1\Bigr)} \sqrt{\Gamma \Bigl(\frac d2\Bigr)}\,.
\]
This implies
\[
 \Gamma \Bigl( \frac d2 -\frac 12\Bigr)/\Gamma \Bigl(\frac d2\Bigr) 
 \leq \sqrt{\Gamma \Bigl(\frac d2 -1\Bigr) 
/ \Gamma \Bigl(\frac d2\Bigr)} =\frac{1}{\sqrt{\frac d2 -1}}\,.
\]
Finally, we deduce:
\begin{equation}
\label{eq:Gammaeq}
\biggl[\Gamma \Bigl(\frac d2 +\frac 12\Bigr)
/\Gamma \Bigl(\frac d2\Bigr)\biggr]^2 \leq \frac {(d-1)^2}{2(d -2)}\,.
\end{equation}

\paragraph{We next estimate $j_{d/2 -1,1}/ j_{d/2 -1/2,1}$.}

A less known result by Lewis--Muldoon is
\begin{lemma}[{\cite[Formula~(1.2)]{LeMu}}]
\label{lemmatech2}
For $\nu \geq 3$, $\nu \mapsto j_{\nu,1}^2$ is convex.
\end{lemma}

We write the convexity of $j_{\nu,1}^2$ (for $\nu \geq 3$)
\[
 j_{\nu-1,1}^2 \leq \frac 12 \bigl(  j_{\nu-3/2,1}^2 + j_{\nu-1/2,1}^2\bigr)\,.
\]
This gives
\begin{equation}
\label{eq:jinter}
\frac{j_{\nu-1,1}^2 }{ j_{\nu-1/2,1}^2} 
\leq \frac 12 \biggl(  \frac{j_{\nu-3/2,1}^2}{j_{\nu-1/2,1}^2}+1\biggr)\,.
\end{equation}

In \cite{AB},   Ashbaugh and Benguria prove:
\[
 \frac{d}{d+4} < \frac{(j_{d/2-1,1})^2}{(j_{d/2,1})^2}\leq \frac{\lambda_1(\Omega)}{\lambda_2(\Omega)},
\]
where $\Omega$ is a $d$-dimensional domain, the right hand side inequality 
being attained for the ball and the left hand side being the Thomson 
inequality. In particular, for the $d$-cube,
$\lambda_1(\Omega)=d$ and $\lambda_2(\Omega)=d+3$. This implies that
\begin{equation} \label{eq:ASB}
\frac{j_{d/2-1,1}}{j_{d/2,1}}
\leq
\Bigl(1-\frac{3}{d+3}\Bigr)^{1/2}\,. 
\end{equation}
To estimate the right hand side of~\eqref{eq:jinter}, we use~\eqref{eq:ASB} 
(with $d$ replaced by $d-1$) to obtain
\begin{equation} \label{eq:control}
\frac{ j_{d/2-1,1}^2}{j_{d/2-1/2,1}^2} 
\leq 
\frac 12 \biggl(  \frac{j_{(d-1)/2-1,1}^2}{j_{(d-1)/2,1}^2}+1\biggr)
\leq  1 - \frac{3}{2(d+2)}\,.
\end{equation}
To our knowledge the best estimates for the zeros of Bessel functions are
\begin{equation}
\label{eq:Jest}
\sqrt{\nu(\nu+2)}<j_{\nu,1}<\sqrt{\nu+1}\, \bigl(\sqrt{\nu+2}+1\bigr)\,.
\end{equation}
The left estimate is available in Watson~\cite{Wa}, the right one was proven by
Chambers~\cite{Ch}, by choosing a good trial state for the Rayleigh quotient.

Inserting~\eqref{eq:Gammaeq},~\eqref{eq:control} and the lower bound 
in~\eqref{eq:Jest} into the 
quotient $\gamma(d+1)/\gamma(d)$, we get the following bound,
\[
\frac{\gamma (d+1)}{\gamma (d)}
\leq 2 \frac{1}{\sqrt{ (d-1)(d+3)} }\,  \frac{(d+1)^2}{d^2}\, 
\frac { (d-1)^2}{(d -2)} \Bigl(  1 - \frac{3}{2(d+2)}\Bigr)^{d/2}\,.
\]
Next, we use the inequality $(1+a/x)^{x}\leq e^a$,
\[
\Bigl(1-\frac{3}{2(d+2)}\Bigr)^{d/2+1}=\Bigl(1-\frac{3/4}{(d+2)/2}\Bigr)^{(d+2)/2}
\leq e^{-3/4}\,,
\]
to get
\[
\frac{\gamma (d+1)}{\gamma (d)}
\leq
\frac{2}{e^{3/4}}
\frac{1}{\sqrt{ (d-1)(d+3)} }\,  
\frac{(d+1)^2}{d^2}\, \frac { (d-1)^2}{(d -2)} \Bigl(1 - \frac{3}{2(d+2)}\Bigr)^{-1}\,.
\]
We estimate $(d-1)(d+3)\geq (d+1/2)^2$ (valid for $d\geq 4$), and
write $1-3/(2(d+2))$ as $(d+1/2)/(d+2)\,$, to find
\[
\frac{\gamma (d+1)}{\gamma (d)}
\leq
\frac{2}{e^{3/4}}\,  \frac{(d+1)^2}{d^2}\, \frac { (d-1)^2}{(d -2)} \frac{(d+2)}{(d+1/2)^2}\, .
\]
We next show that the right-hand side is bounded by $2/e^{3/4}(1+5/d)$ if
$d\geq 4\,$. For this purpose, we write 
\[
\begin{multlined}
\frac{(d+1)^2}{d^2}\, \frac { (d-1)^2}{(d -2)} \frac{(d+2)}{(d+1/2)^2}
-(1+5/d)\\
=\frac{d^5(-4-12/d+39/d^2+41/d^3-2/d^4-8/d^5)}{4(d-2)d^2(d+1/2)^2}\,.
\end{multlined}
\]
Since 
\[
-4-12/d+39/d^2+41/d^3-2/d^4-8/d^5\leq -4+39/d^2+41/d^3\,,\quad\forall d\geq 1\,,
\]
and $-4+39/d^2+41/d^3$ is monotonically decreasing
and equals $-59/64$ for $d=4\,$, we find that
\[
\frac{(d+1)^2}{d^2}\, \frac { (d-1)^2}{(d -2)} \frac{(d+2)}{(d+1/2)^2}
\leq (1+5/d),\quad\forall d\geq 4\,.
\]
Thus
\[
\frac{\gamma (d+1)}{\gamma (d)}
\leq
\frac{2}{e^{3/4}}(1+5/d),\quad\forall d\geq 4\,.
\]
The numerical approximation $2/e^{3/4}\approx 0.945$ implies that 
$2/e^{3/4}<95/100\,$, and so
\[
\frac{\gamma (d+1)}{\gamma (d)}
<
\frac{95}{100}\Bigl(1+\frac{5}{d}\Bigr),\quad\forall d\geq 4\,.
\]
The right-hand side is monotonically decreasing in $d$, and equals one for $d=95\,$. Hence,
\[
\frac{\gamma (d+1)}{\gamma (d)}<1,\quad \forall d\geq 95\,.
\]
The remaining cases, $2\leq d\leq 94\,$, are covered numerically using
Mathematica, and the quotient is plotted in Figure~\ref{fig:gammaquotient}.
This finishes the proof of Theorem~\ref{thm:gammamono}.
\end{proof}

\begin{figure}\centering
\includegraphics{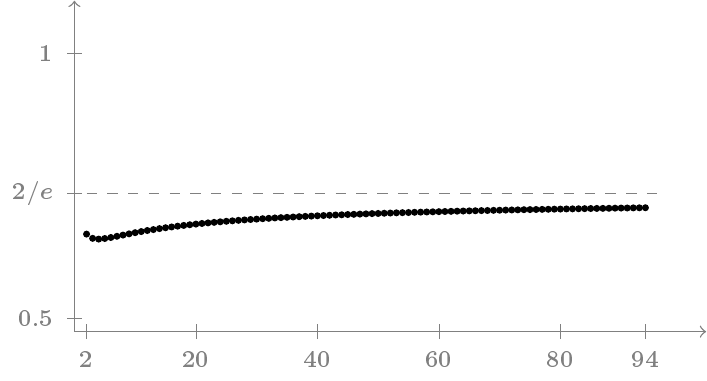}
\caption{The quotient $\gamma(d+1)/\gamma(d)$, for $2\leq d\leq 94\,$. The 
limit is $2/e\,$.}
\label{fig:gammaquotient}
\end{figure}

\begin{remark} As shown in Figure \ref{fig:gammaquotient}, a classical 
asymptotics for $j_{\nu,1}$ gives, after observing that 
\[
\lim_{d\to +\infty} \frac{\gamma (d+1)}{\gamma (d)} 
=\sqrt{ \lim_{d\to +\infty} \frac{\gamma (d+2)}{\gamma (d)} }
\]
the following limiting value for the quotient $\gamma (d+1)/\gamma(d)$:
\[
\lim_{d\to + \infty} \frac{\gamma (d+1)}{\gamma (d)} = \frac{2}{e}\,.
\]
\end{remark}


\section*{Acknowledgements}
We thank I.~Polterovich, E.~Wahl\'{e}n and J. Gustavsson for stimulating 
discussions and informations.

\end{document}